\documentclass[11pt]{article}
\usepackage[margin=1in]{geometry}
\usepackage{algorithm}
\usepackage{algpseudocode}
\usepackage{url} 

\providecommand{\headers}[2]{}

\let\oldtitle\title
\renewcommand{\title}[2][]{\oldtitle{#2}}

\makeatletter
\newcommand{\LoadPackageUnlessAcm}[2][]{%
    \@ifclassloaded{acmart}{%
        \PackageInfo{preamble}{Skipping package `#2' because acmart class is used}%
    }{%
        \if\relax\detokenize{#1}\relax
            \usepackage{#2}%
        \else
            \usepackage[#1]{#2}%
        \fi
    }%
}
\makeatother

\LoadPackageUnlessAcm{caption}
\LoadPackageUnlessAcm{subcaption}
\LoadPackageUnlessAcm{authblk}
\LoadPackageUnlessAcm{hyperref}

\usepackage{amstext}
\usepackage{stmaryrd}
\usepackage{algpseudocode}

\usepackage{units}
\usepackage{cancel}
\usepackage{graphicx}

\usepackage{xcolor}
\usepackage{array}
\usepackage{verbatim}
\usepackage{booktabs}
\usepackage{makecell}
\usepackage{calc}
\usepackage{textcomp}
\usepackage{multirow}
\usepackage{tikz,tikzscale}

\usepackage{pgfplots}
\usepackage{pgfplotstable}
\usepackage{pgfkeys}
\pgfplotsset{compat=1.18}

\definecolor{deepgreen}{RGB}{0,100,0}

\newif\ifrunscripts
\runscriptstrue
\pgfplotsset{
    p2/.style={teal, mark=*},
    p3/.style={orange, mark=triangle*},
    p7/.style={blue, mark=square*},
}

\newcommand{\plotwidth}{0.49\columnwidth}
\newcommand{\plotheight}{0.4\columnwidth}

\pgfplotscreateplotcyclelist{paper markers}{
    {teal,    solid, mark=*},
    {orange,   solid, mark=triangle*},
    {blue,     solid, mark=square*},
    {red,      solid, mark=pentagon*},
    {violet,     solid, mark=otimes*},
    {brown,    solid, mark=halfcircle*},
    {black,   solid, mark=diamond*},
}

\colorlet{BarColorOne}{orange!60!white}
\colorlet{BarColorTwo}{teal!80!white}
\colorlet{BarColorThree}{red!80!black}
\colorlet{BarColorFour}{blue!60!white}
\colorlet{BarColorFive}{brown!70!black}

\pgfplotsset{legend swatch/.style={area legend, draw=none}}

\pgfplotsset{
    paperplot/.style={
            width=\plotwidth,
            height=\plotheight,
            cycle list name=paper markers,
            every axis/.append style={font=\small},
            grid=major,
            thick
        }
}

\pgfplotsset{
    longplot/.style={
            width=0.8\columnwidth,
            height=\plotheight,
            cycle list name=paper markers,
            every axis/.append style={font=\small},
            grid=major,
            thick
        }
}

\usepackage[all]{xy}
\usepackage{mathtools}
\usepackage{placeins}
\usetikzlibrary{calc}
\usetikzlibrary{arrows.meta}
\usetikzlibrary{shadings}
\usepgfplotslibrary{fillbetween}

\usepackage[mode=multiuser,status=draft]{fixme}
\fxsetup{innerlayout=inline}
\fxsetup{targetlayout=colorcb}
\fxusetheme{color}
\FXRegisterAuthor{mw}{envmw}{MW}

\definecolor{apply}{rgb}{0.3,.7,0.}
\definecolor{applyface}{rgb}{.9,.95,0.}
\definecolor{invert}{rgb} {0.5,0.,1.}
\colorlet{invertface}{invert!60!red}

\usepackage[colorinlistoftodos,prependcaption,textsize=small]{todonotes}

\usepackage{colortbl}

\usepackage{booktabs}
\usepackage[font=footnotesize,position=b]{subcaption}
\usepackage{adjustbox}

\makeatletter
\def\pgfplots@getautoplotspec into#1{%
        \begingroup
        \let#1=\pgfutil@empty
        \pgfkeysgetvalue{/pgfplots/cycle multi list/@dim}\pgfplots@cycle@dim
        \let\pgfplots@listindex=\pgfplots@numplots
        \pgfkeysgetvalue{/pgfplots/cycle list set}\pgfplots@listindex@set
        \ifx\pgfplots@listindex@set\pgfutil@empty
        \else
            \c@pgf@counta=\pgfplots@listindex
            \c@pgf@countb=\pgfplots@listindex@set
            \advance\c@pgf@countb by -\c@pgf@counta
            \globaldefs=1\relax
            \edef\setshift{%
                \noexpand\pgfkeys{
                    /pgfplots/cycle list shift=\the\c@pgf@countb,
                    /pgfplots/cycle list set=
                }
            }%
            \setshift%
            \globaldefs=0\relax
        \fi
        \pgfkeysgetvalue{/pgfplots/cycle list shift}\pgfplots@listindex@shift
        \ifx\pgfplots@listindex@shift\pgfutil@empty
        \else
            \c@pgf@counta=\pgfplots@listindex\relax
            \advance\c@pgf@counta by\pgfplots@listindex@shift\relax
            \ifnum\c@pgf@counta<0
                \c@pgf@counta=-\c@pgf@counta
            \fi
            \edef\pgfplots@listindex{\the\c@pgf@counta}%
        \fi
        \ifnum\pgfplots@cycle@dim>0
            \c@pgf@counta=\pgfplots@cycle@dim\relax
            \c@pgf@countb=\pgfplots@listindex\relax
            \advance\c@pgf@counta by-1
            \pgfplotsloop{%
                \ifnum\c@pgf@counta<0
                    \pgfplotsloopcontinuefalse
                \else
                    \pgfplotsloopcontinuetrue
                \fi
            }{%
                \pgfkeysgetvalue{/pgfplots/cycle multi list/@N\the\c@pgf@counta}\pgfplots@cycle@N
                \pgfplotsmathmodint{\c@pgf@countb}{\pgfplots@cycle@N}%
                \divide\c@pgf@countb by \pgfplots@cycle@N\relax
                \expandafter\pgfplots@getautoplotspec@
                \csname pgfp@cyclist@/pgfplots/cycle multi list/@list\the\c@pgf@counta @\endcsname
                {\pgfplots@cycle@N}%
                {\pgfmathresult}%
                \t@pgfplots@toka=\expandafter{#1,}%
                \t@pgfplots@tokb=\expandafter{\pgfplotsretval}%
                \edef#1{\the\t@pgfplots@toka\the\t@pgfplots@tokb}%
                \advance\c@pgf@counta by-1
            }%
        \else
            \pgfplotslistsize\autoplotspeclist\to\c@pgf@countd

            \pgfplots@getautoplotspec@{\autoplotspeclist}{\c@pgf@countd}{\pgfplots@listindex}%
            \let#1=\pgfplotsretval
        \fi
        \pgfmath@smuggleone#1%
        \endgroup
    }

\pgfplotsset{
    cycle list set/.initial=
}
\makeatother

\usepackage{lipsum}
\usepackage{amsfonts}
\usepackage{graphicx}
\usepackage{epstopdf}
\usepackage{amsmath,amssymb,amsthm}
\newtheorem{lemma}{Lemma}
\newtheorem{theorem}{Theorem}
\newtheorem{corollary}{Corollary}
\theoremstyle{definition}
\newtheorem{assumption}{Assumption}
\theoremstyle{remark}

\theoremstyle{plain}
\ifpdf
    \DeclareGraphicsExtensions{.eps,.pdf,.png,.jpg}
\else
    \DeclareGraphicsExtensions{.eps}
\fi

\newcommand{\secref}[1]{Section~\ref{#1}}

\headers{Matrix-Free Finite Elements in Cell-Wise Storage}{Wichrowski}

\title{Coalesced Matrix-Free Finite Elements in Cell-Wise Storage}
\author{Michał Wichrowski}

\usepackage{amsopn}

\providecommand{\keywordsname}{Keywords}
\newcommand{\Keywords}[1]{\par\noindent\textbf{\keywordsname:} #1}
\newcommand{\AMS}[1]{\par\noindent\textbf{AMS subject classifications:} #1}

\title{Coalesced Matrix-Free Geometric Multigrid on Persistent Cell-Wise Storage}

\begin{document}
\author{Michał Wichrowski}
\footnotetext{Interdisziplinäres Zentrum für Wissenschaftliches Rechnen (IWR), Ruprecht-Karls-Universität Heidelberg, Germany, \texttt{mwichro@mimuw.edu.pl}}
\date{}

\maketitle

\begin{abstract}
    %!TEX root = ../main_mg.tex
We present a geometric multigrid preconditioner for high-order continuous finite elements that operates entirely on
redundant, cell-wise stored vectors: the assembled global vector is never formed on any level of the hierarchy. In this
storage paradigm the machinery that classically complicates adaptive multigrid dissolves. Hanging-node constraints are
never assembled: we prove that the plain tensor-product transfer operators, applied to the \emph{unassembled} residual,
algebraically reproduce the classical constrained restriction, including the action of the transposed constraint
matrix, and the edge operators of local smoothing reduce to a pointwise masking of the residual, with no splitting of
the level operator into interior and edge blocks. As a consequence, the single inter-cell primitive of the whole
V-cycle can use a topologically oblivious structured kernel even on adaptively refined meshes. We prove that the
resulting cell-wise V-cycle is equivalent, iterate by iterate, to the classical local multigrid method, and therefore
inherits its convergence theory. Numerical experiments for the Laplace operator confirm grid-independent convergence
that is essentially unaffected by local refinement; on a single GPU, using nothing more than a masked point-Jacobi
smoother, the solver sustains up to $1.1$\, GDoF/s per V-cycle in double precision and reaches end-to-end solve
throughput on par with patch-smoother-based solvers.

\end{abstract}

\Keywords{geometric multigrid, adaptive mesh refinement, hanging nodes, matrix-free, finite elements, GPU}

\AMS{65N55, 65N30, 65F08, 65Y10, 65Y20}

%%%%%%%%%%%%%%%%%%%%%%%%
% Introduction — to be written.
%%%%%%%%%%%%%%%%%%%%%%%%
\section{Introduction}
\label{sec:introduction}
%% TODO: write the introduction. Selling points to lead with:
%%  1. hanging-node constraints no longer needed — embedded in the MG structure;
%%  2. novel realization of edge operators (dual-side masking, no A^SE/A^ES blocks);
%%  3. DSS works unchanged on adaptively refined meshes;
%%  4. novel transfers: valence weighting removed entirely (restrict the unassembled dual vector).
%% Headline number: at p=3, masked point-Jacobi MG matches a patch-smoother solver in solve time (fp64 vs fp64).

%!TEX root = ../main_mg.tex

%%%%%%%%%%%%%%%%%%%%%%%%
% Motivation: high-order matrix-free MG on GPUs; constraints are a data-movement problem.
%%%%%%%%%%%%%%%%%%%%%%%%
% Paragraph: setting — matrix-free high-order FEM on GPUs, multigrid as the solver of choice.
Geometric multigrid is the natural solver for high-order finite element discretizations of elliptic problems: with
sum-factorized operator evaluation and tensor-product inter-grid transfers, the cost of a V-cycle scales as
$\mathcal{O}(p^{d+1})$ per cell and level-independent convergence is well established for a broad class of
problems~\cite{Hackbusch85,Bramble93,ronquist1987spectral,sundar2015comparison}. On modern GPUs, where virtually all
element-wise operations are limited by data movement rather than arithmetic~\cite{williams2009roofline}, the
matrix-free realization of such solvers---including the block-structured hierarchical-hybrid-grid
solvers~\cite{bergen2004hierarchical,gmeiner2015towards} and the hybrid geometric/polynomial hierarchies used at
scale~\cite{fehn2020hybrid,kronbichler2023enhancing}---has been refined to the point that the sum-factorized kernels
themselves are no longer the bottleneck: the cost is concentrated in how data is stored and moved between
them~\cite{fischer2020scalability,kronbichler2022cg}.

% Paragraph: adaptivity introduces hanging nodes; constraints permeate the solver; on GPUs this is a performance problem.
Adaptive mesh refinement compounds this problem. On locally refined quadrilateral and hexahedral meshes, conformity is
classically enforced by constraining the \emph{hanging nodes} that appear along boundaries between regions of different
refinement depth to interpolate their conforming neighbors, and these constraints reach into every component of the
solver. In an assembled framework they are built into the assembly loop, eliminate rows and columns of the system
matrix, and are applied explicitly during every operator evaluation, every smoother step, and every inter-grid
transfer~\cite{bangerth2012algorithms,Kronbichler2012}. Modern matrix-free solvers avoid forming the stiffness matrix
but retain the full constraint infrastructure: index sets, constraint matrices, and the splitting of level operators
into interior and edge blocks required by the local smoothing
algorithm~\cite{JanssenKanschat11,kronbichler2019multigrid,munch2023cache,ljungkvist2017matrix}. On a GPU this infrastructure is not merely
an implementation nuisance. Constraint resolution is realized through indirect, data-dependent memory accesses embedded
in the gather-scatter stage---exactly the access pattern that forfeits coalescing and fragments the high-throughput
sweeps that the structured parts of the mesh would otherwise admit. The handling of a lower-dimensional set of
interface degrees of freedom thereby shapes---and slows---the kernels that process the volume.

% Paragraph: prior work — local smoothing lineage; what none of them eliminate.
The theoretical foundations of adaptive geometric multigrid are well developed. Restricting the smoother to the locally
refined part of each level---\emph{local smoothing}---goes back to Brandt's multilevel adaptive
technique~\cite{brandt1977mlat} and the fast adaptive composite-grid method of McCormick and
Thomas~\cite{mccormick1986fac}; level-independent convergence in the conforming finite element setting with
hanging-node constraints was established by Janssen and Kanschat~\cite{JanssenKanschat11} in the multilevel
subspace-correction framework~\cite{bramble1990parallel,Bramble93}; uniform convergence on adaptively refined meshes
was also shown by Wu and Chen~\cite{wu2006uniform}. Their algorithm, implemented in deal.II~\cite{Kanschat08smoother},
was subsequently extended to massively parallel and matrix-free
settings~\cite{sundar2012parallel,clevenger2021flexible,kronbichler2019multigrid,munch2023cache,wichrowski2022matrix,bergbauer2025high} and
serves as the standard reference for high-order adaptive multigrid. What all of these works share---what none of them
eliminate---is the treatment of hanging-node constraints as explicit algebraic objects: the local smoother must know
the refinement-edge partition, the inter-grid residual transfer is mediated by the transposed constraint matrix, and
the implementation queries the constraint data structures every time the operator touches a non-conforming interface.

%%%%%%%%%%%%%%%%%%%%%%%%
% This paper: the cell-wise framework and what the multigrid construction achieves on top of it.
%%%%%%%%%%%%%%%%%%%%%%%%
% Paragraph: starting point — the cell-wise storage framework of the companion paper.
This paper constructs an adaptive geometric multigrid preconditioner in which none of this machinery appears. The
starting point is the cell-wise storage framework of the companion paper~\cite{wichrowski2026DSS}: every field is kept
in a redundant, element-local representation throughout the entire solver, and the assembled vector of unique degrees
of freedom is never formed. Operator evaluation, vector algebra, and the Krylov dot products are then purely
cell-local; the only operation that communicates between cells is direct stiffness summation (DSS), which accumulates
redundantly stored partial integrals across shared interfaces. The companion paper develops a GPU-efficient DSS
algorithm for block-structured meshes that requires neither indirect addressing nor atomic operations, sweeping the
block-structured memory layout with statically known offsets instead of traversing connectivity tables.

% Paragraph: contribution 1 — constraint-free, valence-free hanging-node treatment, proved exact.
The multigrid construction built on this framework has three properties that, taken together, distinguish it from the
methods above. First, \emph{hanging-node constraints are entirely absent}---not approximated, not hidden in a wrapper,
but never represented: at no point does any solver component evaluate, store, or enforce a constraint across a
non-conforming interface. Conformity is carried implicitly by the geometric inter-grid transfers. The tensor-product
prolongation interpolates a coarse field onto child cells and produces a $C^0$-continuous result by construction, so no
constraint matrix is needed to restore continuity after prolongation; the residual restriction is its adjoint acting on
the \emph{unassembled} residual, and we prove that it reproduces exactly the classical constrained transfer---the
transposed constraint matrix applied to the assembled residual. The unassembled representation is essential here:
because the partial interface integrals remain naturally partitioned among the sharing cells, the restriction routes
them to the correct coarse degrees of freedom without any topology-dependent interface weights. The equivalence is a
consequence of the primal-dual structure of the storage, not an extra algorithmic step.

% Paragraph: contribution 2 (centerpiece) — the convergence mask is the same mechanism that licenses the oblivious kernel.
Second---and this is the observation we consider the conceptual core of the paper---\emph{the requirements of the
    convergence theory and the requirements of the fast kernel coincide at the refinement edge}. The structured DSS cascade
of~\cite{wichrowski2026DSS} enumerates its exchange partners by memory layout, not by topology; on the level meshes of
an adaptive hierarchy it therefore exchanges data across hanging interfaces with cells that have no active counterpart
at the current level, producing incorrect sums there. One might expect this to force a topology-aware variant,
reintroducing the per-interface queries the structured design eliminates. It does not, for the following reason: the
local multigrid method must, for the convergence reasons established in~\cite{JanssenKanschat11}, zero the residual on
the refinement edge before the smoother's preconditioner acts and discard the smoother's correction there afterwards.
These two masking operations render the cascade's errors harmless---the incorrect sums are produced from entries that
are zeroed on input and land in entries that are discarded on output, and they never enter the iterate. The mechanism
required for convergence and the mechanism that licenses the topologically oblivious kernel are the same mechanism. As
a result, the highly optimized conforming DSS kernel runs \emph{unchanged} on arbitrarily complex adaptive refinement
patterns.

% Paragraph: contribution 3 — communication confined to the smoother; abstract smoother requirement.
Third, the entire V-cycle---level-wise residual distribution, restriction, prolongation, operator application, and the
dot products of the outer conjugate gradient accelerator---involves no inter-cell
communication except for the DSS application inside the smoother. The theory makes only one structural demand on the
smoother: it must consume the residual through its assembled form and return a continuity-satisfying correction. DSS
followed by diagonal Jacobi scaling is the simplest realization of this requirement and is the variant studied here;
overlapping tensor-product patch smoothers~\cite{pavarino1993additive,WitteArndtKanschat21,brubeck2021scalable,witte2025tensor,cui2025implementation}, whose
patch-local solves produce continuous corrections by construction, satisfy the same requirement and can be substituted
without modifying any other component of the hierarchy.

% Paragraph: theory by equivalence, and honest scope.
We do not derive convergence estimates from scratch. Instead, we prove that the cell-wise V-cycle is algebraically
identical, iterate for iterate, to the local multigrid V-cycle of Janssen and Kanschat~\cite{JanssenKanschat11}: the
equivalence is an exact operator identity, established by induction over the levels from three intertwining
relations---for the prolongation, for the restriction of the unassembled residual, and for the masked smoother. The
level-independent convergence theory of~\cite{JanssenKanschat11} then transfers verbatim. The scope of the claim should
be stated plainly: the contribution of this paper is architectural, not spectral. With the point-Jacobi smoother the
iteration counts grow with the polynomial degree, as they do for any point smoother, and we make no claim of
$p$-robustness; $p$-robust contraction requires the subspace-correction or patch smoothers cited
above~\cite{schoberl2008additive,pazner2020efficient}. The experiments of \secref{sec:results} situate the cheap,
perfectly structured Jacobi sweep studied here against a state-of-the-art patch-smoother
implementation~\cite{cui2025implementation}.

% Paragraph: implementation vehicle and results preview.
The solver is implemented in Triton~\cite{triton2019}, a tile-based GPU language whose programming model aligns with
the block-structured cell-wise layout and whose compiler handles memory coalescing and tensor-core mapping without
vendor-specific code. On a single A100, the resulting V-cycle sustains up to $1.1$\,GDoF/s per cycle at $p = 3$ in
\texttt{fp64}, for an end-to-end solve throughput of about $100$\,MDoF/s---on par with a recent vertex-patch smoother
implementation~\cite{cui2025implementation}, while driving the error to a tighter tolerance---and both the iteration
count and the streaming throughput are essentially unchanged under deep adaptive refinement.

%%%%%%%%%%%%%%%%%%%%%%%%
% Outline
%%%%%%%%%%%%%%%%%%%%%%%%
% Paragraph: roadmap.
The remainder of the paper is organized as follows. \secref{sec:framework} recalls the cell-wise framework and
notation of the companion paper~\cite{wichrowski2026DSS}. \secref{sec:multigrid} constructs the V-cycle, first on
uniformly refined hierarchies and then in the locally refined setting, and proves the equivalence with classical local
multigrid. \secref{sec:smoother_realization} supplies the algorithmic realization of the smoother---the DSS-based
Jacobi sweep---and analyzes the interaction between the structured DSS cascade and the refinement edge.
\secref{sec:implementation} records the implementation measures that reduce a V-cycle to a minimal number of
streaming passes. \secref{sec:results} presents numerical experiments, and \secref{sec:conclusions}
concludes.

%%%%%%%%%%%%%%%%%%%%%%%%
% Model problem and discretization 
%%%%%%%%%%%%%%%%%%%%%%%%
\section{Preliminaries: the Cell-Wise Solver Framework}
\label{sec:framework}

This section collects the notation and the results of the companion paper~\cite{wichrowski2026DSS} that the multigrid
construction builds on. Readers familiar with~\cite{wichrowski2026DSS} may skim it for notation; all proofs are given
there.

% Paragraph: model problem.
As a model problem we consider the Poisson equation on a bounded domain $\Omega \subset \mathbb{R}^d$, $d \in \{2,3\}$,
with homogeneous Dirichlet boundary conditions: find $u \in H_0^1(\Omega)$ such that
\begin{equation} \label{eq:poisson}
    \int_\Omega \nabla u \cdot \nabla v \, dx = \int_\Omega f v \, dx \qquad \text{for all } v \in H_0^1(\Omega).
\end{equation}
The domain is partitioned into a mesh $\mathcal{T}$ of quadrilateral ($d=2$) or hexahedral ($d=3$) cells, and the
discretization uses continuous tensor-product finite elements $\mathbb{Q}_p$ of polynomial degree $p$ with
Gauss--Lobatto nodal bases. Everything below applies verbatim to any symmetric second-order elliptic operator; the Laplacian is used for the numerical experiments of \secref{sec:results}.

%%%%%%%%%%%%%%%%%%%%%%%%
% Cell-wise storage, primal/dual spaces, gather and constraint operators
%%%%%%%%%%%%%%%%%%%%%%%%
\subsection{Cell-Wise Storage and the Primal-Dual Structure}
\label{sec:cellwise_recap}

% Paragraph: the two spaces and the gather operator.
The framework uses a cell-wise storage format: every field is kept in the redundant, \emph{cell-wise} (element-local)
representation throughout the entire solver, and the assembled vector of unique degrees of freedom is never formed. Let
$\mathbb{V}_{\text{cell}}$ denote the broken space of all local degrees of freedom, stored independently on each cell,
and let $\mathbb{V}_{CG}$ denote the minimal global space of the conforming discretization. The \emph{gather} operator
\begin{equation}
    \mathcal{G} \colon \mathbb{V}_{CG} \to \mathbb{V}_{\text{cell}}
\end{equation}
replicates each global coefficient into all cell-local copies of the corresponding degree of freedom; its transpose
$\mathcal{G}^T \colon \mathbb{V}_{\text{cell}}^* \to \mathbb{V}_{CG}^*$ performs the additive assembly across shared
interfaces. A vector $u \in \mathbb{V}_{\text{cell}}$ represents a continuous field when it lies in
$\text{Im}(\mathcal{G})$; equivalently, $u = C u$ for any idempotent constraint operator $C \colon
    \mathbb{V}_{\text{cell}} \to \mathbb{V}_{\text{cell}}$ with $\text{Im}(C) = \mathcal{G}(\mathbb{V}_{CG})$. The
constraint formalism covers hanging-node constraints of adaptively refined meshes in the same way as plain
inter-element continuity, a fact that the multigrid construction of \secref{sec:multigrid} uses.

% Paragraph: primal vs dual objects, the pairing identity.
Two kinds of objects live in the cell-wise storage and must be carefully distinguished. \emph{Primal} vectors
(iterates, search directions, corrections) are continuous fields: their duplicated entries agree by construction.
\emph{Dual} vectors (residuals, fluxes), written with a tilde as $\tilde{r} \in \mathbb{V}_{\text{cell}}^*$, are raw
unassembled cell-local integrals: their duplicated entries hold complementary \emph{partial} sums that are meaningful
without any synchronization. The bridge between the two is the duality pairing identity of~\cite{wichrowski2026DSS}:
for any dual vector $\tilde{f} \in \mathbb{V}_{\text{cell}}^*$ and any continuous primal vector $u = \mathcal{G}
    u^{\text{global}}$,
\begin{equation} \label{eq:pairing_global}
    \langle \tilde{f}, u \rangle = \langle \mathcal{G}^T \tilde{f}, u^{\text{global}} \rangle,
\end{equation}
i.e.\ the local Euclidean pairing of an unassembled dual vector with a continuous primal vector equals the
assembled global pairing. All inner products of a Krylov solver can be arranged to pair one object of each kind, and
are then computable from local data without communication.

% Paragraph: the discrete system in cell-wise form.
The discrete operator is applied cell by cell: the local operator $A \colon \mathbb{V}_{\text{cell}} \to
    \mathbb{V}_{\text{cell}}^*$ maps a continuous primal vector to an unassembled dual vector by matrix-free evaluation of
the element integrals. With the unassembled right-hand side $\tilde{b} \in \mathbb{V}_{\text{cell}}^*$, the discrete
problem reads: find $u \in \mathbb{V}_{\text{cell}}$ with $u = Cu$ such that
\begin{equation} \label{eq:constrained_system}
    C^T A C u = C^T \tilde{b},
\end{equation}
which is equivalent to the classical assembled system $A_{CG} u^{\text{global}} = b_{CG}$ with $A_{CG} =
    \mathcal{G}^T A \mathcal{G}$ and $b_{CG} = \mathcal{G}^T \tilde{b}$, the solutions being related by $u = \mathcal{G}
    u^{\text{global}}$.

%%%%%%%%%%%%%%%%%%%%%%%%
% What the framework demands from the preconditioner/smoother
%%%%%%%%%%%%%%%%%%%%%%%%
% Paragraph: the only structural requirement: produce continuous output from assembled input.
In this framework the operator application $A$ and all vector algebra are cell-local; the only component that converts
a dual residual back into a primal correction is the preconditioner $P \colon \mathbb{V}_{\text{cell}}^* \to
    \mathbb{V}_{\text{cell}}$, and with it the multigrid smoother. These components consume the residual through its
assembly $\mathcal{G}^T \tilde{r}$ and return a continuous vector, admitting the factorization
\begin{equation} \label{eq:precond_consistency}
    P = \mathcal{G} \, P_{CG} \, \mathcal{G}^T
\end{equation}
for some operator $P_{CG} \colon \mathbb{V}_{CG}^* \to \mathbb{V}_{CG}$ on the minimal global space. The multigrid method of \secref{sec:multigrid} is formulated against this requirement,
and the concrete realization using direct stiffness summation is deferred to \secref{sec:smoother_realization}.

\section{Multigrid Preconditioner}
\label{sec:multigrid}

Operator evaluation and DSS assembly lay the foundation for geometric multigrid (MG) preconditioners. To describe the
framework, let us assume a hierarchy of meshes
\begin{gather}
    \mathcal{T}_0 \sqsubset \mathcal{T}_1 \sqsubset \dots \sqsubset \mathcal{T}_L,
\end{gather}
subdividing a domain $\Omega \subset \mathbb{R}^d$. Here, the symbol ``$\sqsubset$'' denotes nestedness, meaning that every cell of a mesh $\mathcal{T}_{\ell}$ is obtained from a cell of the coarser mesh $\mathcal{T}_{\ell-1}$ by refinement. This hierarchy leads to a corresponding sequence of finite element spaces
\begin{gather}
    \mathbb{V}_0 \subset \mathbb{V}_1 \subset \dots \subset \mathbb{V}_L,
\end{gather}
where $\mathbb{V}_\ell$ is the primal space associated with level $\ell$. We identify $\mathbb{V}_\ell$ with its coefficient representation $\mathbb{R}^{\dim \mathbb{V}_\ell}$ and, for simplicity, use the same notation for a finite element function $u_\ell \in \mathbb{V}_\ell$ and its vector of degrees of freedom.

The discrete problem on each level $\ell$ is given by the linear system
\begin{gather}
    \label{eq:matrix_level}
    A_\ell u_\ell = b_\ell,
\end{gather}
where $A_\ell \colon \mathbb{V}_\ell \to \mathbb{V}_\ell^*$ is the discrete operator and $b_\ell \in \mathbb{V}_\ell^*$ is the right-hand side. In the context of multigrid methods, we transition between levels using geometric transfer operators:
\begin{gather}
    \begin{array}{rlcl}
        \mathcal{R}_\ell^{\ell-1} \colon & \mathbb{V}_{\ell}^* & \to & \mathbb{V}_{\ell-1}^*, \\
        \mathcal{P}_{\ell-1}^\ell \colon & \mathbb{V}_{\ell-1} & \to & \mathbb{V}_{\ell},
    \end{array}
\end{gather}
representing restriction and prolongation, respectively. In our framework, $\mathcal{P}_{\ell-1}^\ell$ is typically chosen as the natural embedding between the nested spaces, while $\mathcal{R}_\ell^{\ell-1}$ is defined as its adjoint. Additionally, the algorithm relies on a smoothing operator $\mathcal{S}_\ell$, characterized by its error propagation properties, to reduce high-frequency error components on each level.

\begin{algorithm}[H]
    \caption{Geometric Multigrid $V$-Cycle: $MG(A_\ell, b_\ell, u_\ell, \ell)$}
    \label{alg:mg_vcycle}
    \begin{algorithmic}[1]
        \Require Level $\ell$, current solution $u_\ell$, right-hand side $b_\ell$.
        \If{$\ell = 0$}
        \State $u_0 \gets A_0^{-1} b_0$ \Comment{Coarse Grid Solve}
        \State \Return $u_0$
        \EndIf
        \State $u_\ell \gets \mathcal{S}_\ell^{\nu_1}(A_\ell, b_\ell, u_\ell)$ \Comment{Pre-smoothing}
        \State $r_\ell \gets b_\ell - A_\ell u_\ell$ \Comment{Compute residual}
        \State $b_{\ell-1} \gets \mathcal{R}_\ell^{\ell-1} r_\ell$ \Comment{Restrict residual to coarse grid}
        \State $e_{\ell-1} \gets 0$ \Comment{Initialize coarse error}
        \State $e_{\ell-1} \gets MG(A_{\ell-1}, b_{\ell-1}, e_{\ell-1}, \ell - 1)$ \Comment{Recursive coarse grid correction}
        \State $u_\ell \gets u_\ell + \mathcal{P}_{\ell-1}^\ell e_{\ell-1}$ \Comment{Prolongate and correct}
        \State $u_\ell \gets \mathcal{S}_\ell^{\nu_2}(A_\ell, b_\ell, u_\ell)$ \Comment{Post-smoothing}
        \State \Return $u_\ell$
    \end{algorithmic}
\end{algorithm}

\subsection{Inter-grid Transfer Operations}
% Explain the application of transfer operators via tensor products.
To perform inter-grid transfers between nested hierarchical spaces, we apply the prolongation and restriction
operations utilizing the tensor-product structure of the underlying finite elements. In one dimension, a parent cell is
split into two children, requiring two distinct 1D prolongation matrices, denoted $P_{0}$ and $P_{1}$, corresponding to
the left (child 0) and right (child 1) sub-intervals. For higher-dimensional problems ($d>1$), the local prolongation
for a specific child cell is computed directly via the Kronecker product of the 1D prolongation matrices acting on the
degrees of freedom. The selection of $\mathcal{P}_0$ or $\mathcal{P}_1$ along each spatial dimension is determined by
the corresponding bit of the child's topological index. The restriction operation is similarly defined by applying the
transpose of these 1D matrices.

% Describe the additive nature of the restriction operator on the cell-wise storage.
During restriction, degrees of freedom located on the interface between sibling cells are projected and accumulated
into the shared parent's degrees of freedom. Because multiple fine cells restrict their contributions into the same
coarse parent, the restriction operator is an additive operation over the children. The localized nature of these
operations is detailed in Algorithm~\ref{alg:prolongation} and Algorithm~\ref{alg:restriction}.

% Algorithm for prolongation.
\begin{algorithm}[H]
    \caption{Prolongation in Cell-Wise representation  }
    \label{alg:prolongation}
    \begin{algorithmic}[1]
        \Require Hierarchy of cells, where parent $K$ has children $\{K_c\}$. Coarse values $u_{\ell-1}$. Fine vector $u_\ell$ initialized to zero.
        \Ensure Fine values $u_{\ell}$.
        \For{\textbf{each} coarse parent cell $K$}
        \For{\textbf{each} child $K_c$ of $K$}
        \State $u_{\ell}|_{K_c} \gets u_{\ell}|_{K_c} + \left( \bigotimes_{i=1}^d \mathcal{P}_{b_i} \right) u_{\ell-1}|_{K}$ \Comment{$b_i$ is the $i$-th bit of child index $c$}
        \EndFor
        \EndFor
    \end{algorithmic}
\end{algorithm}

% Algorithm for restriction.
\begin{algorithm}[H]
    \caption{Restriction in Cell-Wise representation }
    \label{alg:restriction}
    \begin{algorithmic}[1]
        \Require Hierarchy of cells, where parent $K$ has children $\{K_c\}$. Fine residual $r_{\ell}$.
        \Ensure Coarse residual $r_{\ell-1}$.
        \For{\textbf{each} coarse parent cell $K$}
        \State Initialize $r_{\ell-1}|_{K} \gets 0$
        \For{\textbf{each} child $K_c$ of $K$}
        \State $r_{\ell-1}|_{K} \gets r_{\ell-1}|_{K} + \left( \bigotimes_{i=1}^d \mathcal{P}_{b_i}^T \right) r_{\ell}|_{K_c}$ \Comment{$b_i$ is the $i$-th bit of child index $c$}
        \EndFor
        \EndFor
    \end{algorithmic}
\end{algorithm}

% Establish the mathematical equivalence of local inter-grid transfers.
To establish that these purely local, cell-wise transfers correctly propagate information between the continuous finite
element spaces, we formulate the relationships between the local operators and their classical, globally-assembled
counterparts.

% Establish the commutative property of the gather and prolongation operators.
\begin{lemma}[Commutativity of Prolongation]
    \label{lem:prolongation_commutativity}
    Let $\mathcal{P}_{CG, \ell-1}^\ell \colon \mathbb{V}_{CG, \ell-1} \to \mathbb{V}_{CG, \ell}$ be the classical global prolongation operator, and $\mathcal{P}_{\text{cell}, \ell-1}^\ell \colon \mathbb{V}_{\text{cell}, \ell-1} \to \mathbb{V}_{\text{cell}, \ell}$ be the local cell-wise prolongation operator implicitly defined in Algorithm~\ref{alg:prolongation}. The gather operations commute with the hierarchical prolongation such that:
    \begin{equation}
        \mathcal{P}_{\text{cell}, \ell-1}^\ell \mathcal{G}_{\ell-1} = \mathcal{G}_{\ell} \mathcal{P}_{CG, \ell-1}^\ell.
    \end{equation}
\end{lemma}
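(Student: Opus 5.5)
The plan is to verify the identity cell by cell, by applying both sides to an arbitrary global coefficient vector $u^{\text{global}}_{\ell-1} \in \mathbb{V}_{CG,\ell-1}$. Both sides lie in $\mathbb{V}_{\text{cell},\ell}$, and each entry of that space is a nodal value at a Gauss--Lobatto support point of a particular fine cell $K_c$. It therefore suffices to show that, for every fine cell $K_c$ with parent $K$, the local block $\bigl(\mathcal{P}_{\text{cell},\ell-1}^\ell \mathcal{G}_{\ell-1} u^{\text{global}}_{\ell-1}\bigr)|_{K_c}$ coincides with $\bigl(\mathcal{G}_\ell \mathcal{P}_{CG,\ell-1}^\ell u^{\text{global}}_{\ell-1}\bigr)|_{K_c}$.

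For the left-hand side, $\mathcal{G}_{\ell-1}$ gives on the parent $K$ the coefficients of the restriction $u_h|_K$ of the continuous coarse function $u_h = \sum_j u^{\text{global}}_{\ell-1,j}\varphi_j \in \mathbb{V}_{\ell-1}$. In Algorithm~\ref{alg:prolongation} each fine cell has exactly one parent, so each block receives a single contribution and the accumulation is merely an assignment. That contribution is $\bigl(\bigotimes_i \mathcal{P}_{b_i}\bigr) u_h|_K$.

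The key step is to interpret the 1D matrices $\mathcal{P}_0$ and $\mathcal{P}_1$ correctly. The entry $(\mathcal{P}_b)_{mn}$ is the value of the $n$-th coarse 1D Lagrange basis function at the $m$-th fine Gauss--Lobatto node of sub-interval $b$. These are the matrices of the natural embedding, which is how the paper defines $\mathcal{P}$. By the tensor-product structure of $\mathbb{Q}_p$, the Kronecker product $\bigotimes_i \mathcal{P}_{b_i}$ maps coefficients of a polynomial on $K$ to its nodal values at the support points of $K_c$. Since $\mathbb{Q}_p(K)|_{K_c} \subset \mathbb{Q}_p(K_c)$, this interpolation is exact. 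The left-hand block is therefore precisely the vector of values of $u_h$ at the nodes of $K_c$.

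For the right-hand side, the classical prolongation is the embedding $\mathbb{V}_{CG,\ell-1} \subset \mathbb{V}_{CG,\ell}$. So $\mathcal{P}_{CG,\ell-1}^\ell u^{\text{global}}_{\ell-1}$ is the vector of nodal values of the same function $u_h$ at the global fine support points, and $\mathcal{G}_\ell$ copies to $K_c$ the values at the nodes of $K_c$. Both blocks thus equal the nodal values of $u_h$ on $K_c$, which proves the lemma.

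The step I expect to need the most care is well-definedness on the right-hand side: the argument requires $u_h$ to be single-valued at fine nodes shared by several fine cells, including nodes that lie on faces of different parents. This holds because $u_h$ is globally $C^0$. On locally refined levels the same argument applies as long as $\mathbb{V}_{CG}$ is understood with its hanging-node constraints. The gathered coarse values are then those of a genuinely continuous function, so exact polynomial interpolation automatically produces constraint-consistent fine values.
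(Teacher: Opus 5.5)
Your proposal is correct and follows essentially the same route as the paper, which argues that on each child both composites yield the nodal values of the same coarse field because the interpolation onto a child depends only on its parent's degrees of freedom. You make this precise, where the paper leaves it implicit, by identifying the entries of $\mathcal{P}_0,\mathcal{P}_1$ as coarse basis evaluations at the fine Gauss--Lobatto nodes and by using $\mathbb{Q}_p(K)|_{K_c}\subset\mathbb{Q}_p(K_c)$. Your closing concern about hanging nodes is unnecessary: the paper takes every level mesh $\mathcal{T}_\ell$ to be conforming (hanging nodes belong only to the active mesh), so the level spaces simply live on the smaller subdomain $\Omega_\ell$, and your cell-by-cell argument covers this automatically because every level-$\ell$ cell has a parent on level $\ell-1$.
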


\begin{proof}
    Both composite operators map a globally continuous coarse vector $u_{CG, \ell-1} \in \mathbb{V}_{CG, \ell-1}$ to a redundantly stored fine vector in $\mathbb{V}_{\text{cell}, \ell}$. Because standard finite element shape functions have local support, the polynomial interpolation onto a child cell depends on the degrees of freedom of its immediate parent. Consequently, interpolating the global continuous field and subsequently extracting the local degrees of freedom on the fine grid ($\mathcal{G}_{\ell} \mathcal{P}_{CG, \ell-1}^\ell$) yields the same discrete values as extracting the coarse local degrees of freedom first and evaluating the geometric embedding cell-by-cell ($\mathcal{P}_{\text{cell}, \ell-1}^\ell \mathcal{G}_{\ell-1}$).
\end{proof}

% Establish the algebraic equivalence of local additive restriction of raw dual vectors.
\begin{lemma}[Equivalence of Unassembled Restriction]
    \label{lem:restriction_equivalence}
    Let $\mathcal{R}_{CG, \ell}^{\ell-1} = (\mathcal{P}_{CG, \ell-1}^\ell)^T$ be the classical global restriction operator acting on the assembled dual space, and $\mathcal{R}_{\text{cell}, \ell}^{\ell-1} = (\mathcal{P}_{\text{cell}, \ell-1}^\ell)^T$ be the additive cell-wise restriction acting on unassembled dual vectors (Algorithm~\ref{alg:restriction}). The global assembly of a locally restricted dual vector equals the classical restriction of a globally assembled dual vector:
    \begin{equation}
        \mathcal{G}_{\ell-1}^T \mathcal{R}_{\text{cell}, \ell}^{\ell-1} = \mathcal{R}_{CG, \ell}^{\ell-1} \mathcal{G}_{\ell}^T.
    \end{equation}
\end{lemma}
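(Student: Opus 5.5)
The plan is to obtain the identity by transposing Lemma~\ref{lem:prolongation_commutativity}, so no new structural argument is needed. Lemma~\ref{lem:prolongation_commutativity} is an operator identity between linear maps $\mathbb{V}_{CG,\ell-1} \to \mathbb{V}_{\text{cell},\ell}$:
\begin{equation*}
    \mathcal{P}_{\text{cell}, \ell-1}^\ell \mathcal{G}_{\ell-1} = \mathcal{G}_{\ell} \mathcal{P}_{CG, \ell-1}^\ell .
\end{equation*}
Both sides are matrices with respect to the coefficient bases that identify each space with $\mathbb{R}^n$. Taking transposes and using $(XY)^T = Y^T X^T$ gives
\begin{equation*}
    \mathcal{G}_{\ell-1}^T (\mathcal{P}_{\text{cell}, \ell-1}^\ell)^T = (\mathcal{P}_{CG, \ell-1}^\ell)^T \mathcal{G}_{\ell}^T .
\end{equation*}
Substituting the definitions $\mathcal{R}_{\text{cell}, \ell}^{\ell-1} = (\mathcal{P}_{\text{cell}, \ell-1}^\ell)^T$ and $\mathcal{R}_{CG, \ell}^{\ell-1} = (\mathcal{P}_{CG, \ell-1}^\ell)^T$ yields the claim.

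The step needing care is checking that the transpose of the cell-wise prolongation really is the additive restriction of Algorithm~\ref{alg:restriction}; otherwise the lemma would concern a different operator than the one implemented. Write the prolongation as a block operator: the block from coarse cell $K$ to child $K_c$ is $\bigotimes_i \mathcal{P}_{b_i}$, and all other blocks are zero. This holds because Algorithm~\ref{alg:prolongation} reads only from the parent $K$ and writes only to $K_c$. Each child has a unique parent, so the accumulation into $u_\ell|_{K_c}$ occurs exactly once per child. The transpose therefore has block $(\bigotimes_i \mathcal{P}_{b_i})^T = \bigotimes_i \mathcal{P}_{b_i}^T$ from $K_c$ to $K$. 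Summing over children gives exactly the loop in Algorithm~\ref{alg:restriction}.

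It also needs saying that the relevant duality pairings are the Euclidean ones on cell-wise and global coefficients. These are the pairings under which $\mathcal{G}^T$ denotes assembly, consistent with \eqref{eq:pairing_global}, so the transposes in the statement are taken with respect to the same pairings.

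An equivalent pairing argument can serve as a cross-check. For a test vector $v \in \mathbb{V}_{CG,\ell-1}$ and an unassembled $\tilde r$, apply \eqref{eq:pairing_global}:
\begin{equation*}
\langle \mathcal{G}_{\ell-1}^T \mathcal{R}_{\text{cell}}\tilde r, v\rangle = \langle \tilde r, \mathcal{P}_{\text{cell}}\mathcal{G}_{\ell-1} v\rangle = \langle \tilde r, \mathcal{G}_\ell \mathcal{P}_{CG} v\rangle = \langle \mathcal{R}_{CG}\mathcal{G}_\ell^T \tilde r, v\rangle .
\end{equation*}
Since $v$ is arbitrary, the two sides agree. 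This formulation makes clear that, on hanging-node level meshes, $\mathcal{G}$ encodes the constraints, so the transposed constraint matrix emerges automatically.
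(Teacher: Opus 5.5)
Your proof is correct and is essentially the paper's: you transpose the intertwining identity of Lemma~\ref{lem:prolongation_commutativity} and substitute the definitions of the two restrictions. Your block-structure check that $(\mathcal{P}_{\text{cell},\ell-1}^\ell)^T$ is exactly the loop of Algorithm~\ref{alg:restriction}, and your pairing cross-check, are sound additions that the paper leaves implicit. One caveat concerns only your closing remark: in the paper the level meshes $\mathcal{T}_\ell$ are conforming, so $\mathcal{G}_\ell$ encodes no hanging-node constraints, and $C_{HN}^T$ enters only later, in Lemma~\ref{lem:constrained_residual}, through the invariance property \eqref{eq:constraints_prolongation} rather than through $\mathcal{G}$.
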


\begin{proof}
    The relation follows directly from algebraic transposition of Lemma~\ref{lem:prolongation_commutativity}. Taking the transpose of $\mathcal{P}_{\text{cell}, \ell-1}^\ell \mathcal{G}_{\ell-1} = \mathcal{G}_{\ell} \mathcal{P}_{CG, \ell-1}^\ell$ yields:
    \begin{equation*}
        \mathcal{G}_{\ell-1}^T (\mathcal{P}_{\text{cell}, \ell-1}^\ell)^T = (\mathcal{P}_{CG, \ell-1}^\ell)^T \mathcal{G}_{\ell}^T.
    \end{equation*}
    Substituting the definition of the restriction operators as the formal adjoints of the respective prolongation operators gives $\mathcal{G}_{\ell-1}^T \mathcal{R}_{\text{cell}, \ell}^{\ell-1} = \mathcal{R}_{CG, \ell}^{\ell-1} \mathcal{G}_{\ell}^T$.
\end{proof}

Lemma~\ref{lem:restriction_equivalence} highlights an algorithmic advantage of maintaining residuals in their raw,
unassembled state. The partial interface integrals are partitioned among the sharing cells, and the local restriction
routes these fractions to the coarse grid, recovering the exact global integral without any knowledge of the mesh
topology. Restricting a synchronized residual instead (every sharing cell holding the fully accumulated sum) would
over-count each interface integral by the number of sharing cells and would have to be compensated by
topology-dependent weights.

% Conclude the complete equivalence of inter-grid transfers in the cell-wise storage.
\begin{theorem}[Equivalence of Communication-Free Inter-grid Transfers]
    Let $u_{CG, \ell-1} \in \mathbb{V}_{CG, \ell-1}$ be a continuous coarse primal vector with its redundant representation $u_{\ell-1} = \mathcal{G}_{\ell-1} u_{CG, \ell-1}$, and let $\tilde{r}_{\ell} \in \mathbb{V}_{\text{cell}, \ell}^*$ be an unassembled local fine dual residual. The cell-wise transfer operations compute the unassembled representations of their classically transferred global counterparts:
    \begin{align}
        \mathcal{P}_{\text{cell}, \ell-1}^\ell u_{\ell-1}                                & = \mathcal{G}_\ell (\mathcal{P}_{CG, \ell-1}^\ell u_{CG, \ell-1}), \\
        \mathcal{G}_{\ell-1}^T (\mathcal{R}_{\text{cell}, \ell}^{\ell-1} \tilde{r}_\ell) & = \mathcal{R}_{CG, \ell}^{\ell-1} \bar{r}_\ell,
    \end{align}
    where $\bar{r}_\ell = \mathcal{G}_\ell^T \tilde{r}_\ell$ is the globally assembled fine residual.
\end{theorem}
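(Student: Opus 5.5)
The theorem is essentially a pointwise restatement of Lemma~\ref{lem:prolongation_commutativity} and Lemma~\ref{lem:restriction_equivalence}. I would therefore prove it by applying those two operator identities to the specific vectors named in the statement, rather than arguing from the algorithms again.

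For the prolongation identity, I start from the given representation $u_{\ell-1} = \mathcal{G}_{\ell-1} u_{CG,\ell-1}$ and substitute it into the left-hand side, giving $\mathcal{P}_{\text{cell},\ell-1}^\ell \mathcal{G}_{\ell-1} u_{CG,\ell-1}$. Lemma~\ref{lem:prolongation_commutativity} then replaces the composition $\mathcal{P}_{\text{cell},\ell-1}^\ell \mathcal{G}_{\ell-1}$ by $\mathcal{G}_\ell \mathcal{P}_{CG,\ell-1}^\ell$, which is the first claimed identity. I would also note the interpretive consequence. The output lies in $\operatorname{Im}(\mathcal{G}_\ell)$, so the cell-wise prolongation of a continuous field is again a continuous (primal) field. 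It needs no synchronization or constraint application, and its duplicated entries agree by construction.

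For the restriction identity, I apply Lemma~\ref{lem:restriction_equivalence} to $\tilde r_\ell$. This gives $\mathcal{G}_{\ell-1}^T \mathcal{R}_{\text{cell},\ell}^{\ell-1}\tilde r_\ell = \mathcal{R}_{CG,\ell}^{\ell-1}\mathcal{G}_\ell^T \tilde r_\ell$, and the right-hand side equals $\mathcal{R}_{CG,\ell}^{\ell-1}\bar r_\ell$ by the definition of $\bar r_\ell$. Two points should be stressed. First, the identity holds for an arbitrary $\tilde r_\ell \in \mathbb{V}_{\text{cell},\ell}^*$: no assumption on how the partial sums are distributed among sharing cells is needed, since Lemma~\ref{lem:restriction_equivalence} is an operator identity. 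Second, the restricted vector $\mathcal{R}_{\text{cell},\ell}^{\ell-1}\tilde r_\ell$ is itself an unassembled coarse dual vector whose assembly is the classical restricted residual. This is exactly the sense of ``unassembled representation'' in the statement.

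The only step requiring care is not algebraic but one of consistency of definitions. Lemma~\ref{lem:restriction_equivalence} needs $\mathcal{R}_{\text{cell}}$ as defined by Algorithm~\ref{alg:restriction} to coincide with $(\mathcal{P}_{\text{cell}})^T$ for Algorithm~\ref{alg:prolongation}. I would verify this briefly. Both algorithms loop over parent--child pairs $(K,K_c)$, and the prolongation acts blockwise by $\bigotimes_i \mathcal{P}_{b_i}$ from the parent block to the child block. Its transpose is therefore the sum over children of $\bigotimes_i \mathcal{P}_{b_i}^T$ into the parent block, using $(\bigotimes_i M_i)^T = \bigotimes_i M_i^T$. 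That sum is precisely the additive accumulation in Algorithm~\ref{alg:restriction}. With that checked, both identities follow directly.
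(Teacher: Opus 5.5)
Your proposal is correct and follows the paper's proof essentially verbatim. For the prolongation you substitute $u_{\ell-1} = \mathcal{G}_{\ell-1} u_{CG,\ell-1}$ and invoke Lemma~\ref{lem:prolongation_commutativity}; for the restriction you apply Lemma~\ref{lem:restriction_equivalence} to $\tilde r_\ell$ and identify $\mathcal{G}_\ell^T \tilde r_\ell = \bar r_\ell$. Your extra check that Algorithm~\ref{alg:restriction} is the transpose of Algorithm~\ref{alg:prolongation} (using the parent-to-child block structure and $(\bigotimes_i M_i)^T = \bigotimes_i M_i^T$) is a valid consistency verification that the paper simply takes as the definition of $\mathcal{R}_{\text{cell},\ell}^{\ell-1}$.
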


\begin{proof}
    For the primal prolongation, substituting $u_{\ell-1} = \mathcal{G}_{\ell-1} u_{CG, \ell-1}$ into the left-hand side yields $\mathcal{P}_{\text{cell}, \ell-1}^\ell \mathcal{G}_{\ell-1} u_{CG, \ell-1}$. Applying Lemma~\ref{lem:prolongation_commutativity}, this simplifies to $\mathcal{G}_\ell \mathcal{P}_{CG, \ell-1}^\ell u_{CG, \ell-1}$, proving that the cell-wise prolonged vector is the exact gather of the classically prolonged continuous vector.

    For the dual restriction, applying Lemma~\ref{lem:restriction_equivalence} to the unassembled residual $\tilde{r}_\ell$
    directly provides:
    \begin{equation*}
        \mathcal{G}_{\ell-1}^T (\mathcal{R}_{\text{cell}, \ell}^{\ell-1} \tilde{r}_\ell) = \mathcal{R}_{CG, \ell}^{\ell-1} (\mathcal{G}_\ell^T \tilde{r}_\ell).
    \end{equation*}
    Recognizing that $\mathcal{G}_\ell^T \tilde{r}_\ell = \bar{r}_\ell$ proves that the formal assembly of the locally restricted raw residual identically matches the classical restriction of the assembled residual.
\end{proof}

Thus, assembling the fine grid residual prior to restriction is not required; the localized additive restriction embeds
the topological accumulation into the unassembled coarse space.

\subsection{Adaptive  Multigrid and Edge Constraints}

% Paragraph: the adaptive setting: refinement tree, active mesh, hanging nodes.
When local adaptive mesh refinement is introduced, each cell of the macro-grid may be isotropically refined into $2^d$
child cells, and the refinement may be applied recursively to any subset of the children. The recursion produces a
hierarchical refinement tree of the kind maintained by octree-based mesh frameworks~\cite{burstedde2011p4est,
    bangerth2012algorithms}, and the mesh on which the discrete problem is posed (the \emph{active mesh}
$\mathcal{T}_{\text{act}}$) is the set of leaves of this tree. Because refinement is localized, adjacent leaves may
belong to different levels of the tree. Along the boundaries between regions of different refinement depth
(\emph{refinement edges}), the fine cells contribute degrees of freedom that have no counterpart on the coarse side
(hanging nodes). The active mesh is therefore not conforming, and a conforming discretization on it is classically
obtained by constraining every hanging node to interpolate the neighboring coarse degrees of freedom. In assembled
finite element frameworks, these constraints are built into the assembly loop, eliminate rows and columns of the system
matrix, and require dedicated data structures that are queried during operator
application~\cite{bangerth2012algorithms,Kronbichler2012}. In the cell-wise formulation, these constraints do not
appear as explicit objects.

% Paragraph: global multigrid does not fit the adaptive setting.
\paragraph{Local versus global multigrid.}
A classical geometric multigrid method presupposes a sequence of fully refined, conforming meshes $\mathcal{T}_0
    \subset \mathcal{T}_1 \subset \cdots \subset \mathcal{T}_L$ together with the nested finite element spaces
$\mathbb{V}_{CG, 0} \subset \mathbb{V}_{CG, 1} \subset \cdots \subset \mathbb{V}_{CG, L}$, and applies a smoother on
the entire space $\mathbb{V}_{CG, \ell}$ at every level $\ell$~\cite{Hackbusch85,Bramble93}. Such a hierarchy is
unavailable when only a fraction of each level has been refined. One could, in principle, embed the active mesh into a
notional uniformly refined fine mesh, but this is ruled out on two counts: it would introduce degrees of freedom that
are not represented in the discretization at all, and it would make the per-level cost grow with the size of the
uniform fine mesh rather than with the size of the locally refined region. The work of an adaptive V-cycle must be
proportional to the number of \emph{active} cells, summed over the levels.

% Paragraph: local smoothing: the established remedy and its lineage.
The established remedy is \emph{local} multigrid, also known as local smoothing: smoothing on level $\ell$ is
restricted to the subspace spanned by the degrees of freedom of the locally refined region of that level, while the
remainder of the domain is handled by the coarser levels. This idea underlies Brandt's multi-level adaptive
technique~\cite{brandt1977mlat} and the fast adaptive composite-grid (FAC) method of McCormick and
Thomas~\cite{mccormick1986fac}, and its convergence theory was established within the framework of multilevel subspace
correction~\cite{Bramble93}. For the conforming finite element setting with hanging-node constraints, Janssen and
Kanschat~\cite{JanssenKanschat11} formulated the local V-cycle and proved level-independent convergence; their
formulation, implemented in deal.II~\cite{Kanschat08smoother,JanssenKanschat11}, has since been carried to massively
parallel and matrix-free settings~\cite{kronbichler2019multigrid,munch2023cache}. We adopt this method as our classical
reference point: the algorithm constructed below will be shown to be its exact cell-wise realization, and we import its
convergence theory through that equivalence rather than re-deriving it.

% Paragraph: the structure of local smoothing and the two well-posedness questions.
In the subspace-correction reading~\cite{Bramble93}, the discrete problem on $\mathcal{T}_{\text{act}}$ is split into
an additive sum of contributions associated with the cells of each level, and the smoother on level $\ell$ acts only on
the subspace supported by the refined region of that level. Two questions must be addressed for the resulting V-cycle
to be well posed. First, the level-$\ell$ smoother sees data interpolated from the coarser level along $E_\ell$ that it
must not overwrite. In the classical formulation this is handled by treating the edge values as Dirichlet data, which
requires splitting the level operator into interior and edge blocks. Second, the residual must be transferred across
$E_\ell$ from level $\ell$ to level $\ell-1$. In the classical formulation this transfer is mediated by the transposed
hanging-node constraint matrix. The cell-wise formulation eliminates this machinery: the operator splitting reduces to
a pointwise mask on the smoother's residual (\secref{sec:edge_masking}), and the constraint application is performed
implicitly by the plain tensor-product restriction acting on the unassembled residual
(\secref{sec:residual_equivalence}). The remainder of this subsection makes the level-wise decomposition precise in the
cell-wise setting; the two subsections that follow establish the two equivalences. Figure~\ref{fig:local_mg_levels}
illustrates the decomposition.

\begin{figure}[htbp]
    \centering
    % Level-wise decomposition of a locally refined active mesh (2:1 balanced).
% LEFT : the composed active mesh T_act (flat, top-down), cells of three different
%        sizes, refinement interfaces in red.
% RIGHT: its decomposition into the level meshes T_0, ..., T_L drawn one above the
%        other in an oblique exploded view. On each level the cells refined further
%        (inactive there) are shaded gray; active cells are white. The refinement
%        edge between level (ell-1) and ell is drawn on the finer level ell (E_ell,
%        red); the interior region where the level smoother acts is marked (S_ell).
% The level-1 region is L-shaped: a deeply refined block at the bottom-left (down to
% level L) plus a one-level-deep cell at the top-left. The mesh is 2:1 balanced: the
% level-2 region is separated from the level-0 cells by a collar of level-1 cells.
\begin{tikzpicture}[
                scale=1.0,
                % oblique projection: plane coord (u,v) at height z maps to
                % screen (u + \sk*v, \mgd*v + z)
                act/.style={draw=black, thin, fill=white}, inact/.style={draw=black, thin, fill=gray!45}, foot/.style={draw=black!45,
                                thin, dashed}, redge/.style={green!50!black, line width=1.8pt, shorten >=2pt, shorten <=2pt}, lev/.style={font=\large},
        ]% active cell% refined (inactive) cell% domain footprint% refinement edge E_ell

        \def\sk{0.58}   % horizontal shear per unit depth
        \def\mgd{0.42}  % vertical compression per unit depth
        \def\zz{2.05}   % explosion gap between levels
        \def\W{6}       % domain width  (coarse units)
        \def\H{4}       % domain height (coarse units)

        % parallelogram cell of side s with lower-left plane corner (u,v) at height z
        \newcommand{\mgcell}[5]{% {u}{v}{s}{z}{style}
                \filldraw[#5]
                ({#1+\sk*#2},{\mgd*#2+#4})
                -- ({#1+#3+\sk*#2},{\mgd*#2+#4})
                -- ({#1+#3+\sk*(#2+#3)},{\mgd*(#2+#3)+#4})
                -- ({#1+\sk*(#2+#3)},{\mgd*(#2+#3)+#4}) -- cycle;
        }
        % domain footprint [0,W]x[0,H] outline at height z
        \newcommand{\mgfoot}[1]{%
                \draw[foot]
                ({0+\sk*0},{\mgd*0+#1})
                -- ({\W+\sk*0},{\mgd*0+#1})
                -- ({\W+\sk*\H},{\mgd*\H+#1})
                -- ({0+\sk*\H},{\mgd*\H+#1}) -- cycle;
        }
        % a segment on plane between (u1,v1)-(u2,v2) at height z, in given style
        \newcommand{\mgseg}[6]{% {u1}{v1}{u2}{v2}{z}{style}
                \draw[#6] ({#1+\sk*#2},{\mgd*#2+#5}) -- ({#3+\sk*#4},{\mgd*#4+#5});
        }

        % =====================================================================
        % LEFT: composed active mesh T_act (flat, top-down)
        % =====================================================================
        \begin{scope}[shift={(-7.6,1.6)}, scale=0.78]
                % coarse active cells (size 2)
                \foreach \c/\r in {4/0,2/2,4/2}
                \draw[act] (\c,\r) rectangle (\c+2,\r+2);
                % medium active cells (size 1): L-shaped level-1 region (collar of the
                % deep block, plus the once-refined top-left cell)
                \foreach \c/\r in {2/0,3/0,0/1,1/1,2/1,3/1,0/2,1/2,0/3,1/3}
                \draw[act] (\c,\r) rectangle (\c+1,\r+1);
                % fine active cells (size 1/2) in the deeply refined region [0,2]x[0,1]
                \foreach \c in {0,0.5,1,1.5} \foreach \r in {0,0.5}
                \draw[act] (\c,\r) rectangle (\c+0.5,\r+0.5);
                % refinement interfaces
                \draw[redge] (4,0) -- (4,2);                                  % level 0/1
                \draw[redge] (2,2) -- (4,2);   \draw[redge] (2,2) -- (2,4);   % level 0/1
                \draw[redge] (2,0) -- (2,1);   \draw[redge] (0,1) -- (2,1);   % level 1/L
                \node[anchor=north, font=\Large] at (3,-0.25) {$\mathcal{T}_{\mathrm{act}}$};
        \end{scope}

        % arrow from the assembled mesh to the exploded stack
        \draw[-{Latex[length=3mm]}, thick] (-1.9,2.6) -- (-0.6,2.6)
        node[midway, above, font=\scriptsize] {Level split};

        % =====================================================================
        % LEVEL 0 (z=0): coarse mesh, size-2 cells over [0,6]x[0,4]. The bottom-left
        % pair [0,4]x[0,2] and the top-left cell [0,2]x[2,4] are refined -> inactive
        % (gray). No refinement edge on the coarsest level.
        % =====================================================================
        \def\z{0}
        \foreach \c/\r in {4/0,2/2,4/2} \mgcell{\c}{\r}{2}{\z}{act}
        \mgcell{0}{0}{2}{\z}{inact}
        \mgcell{2}{0}{2}{\z}{inact}
        \mgcell{0}{2}{2}{\z}{inact}
        \node[lev, anchor=east] at ({0+\sk*\H-0.35},{\mgd*\H+\z}) {$\mathcal{T}_0$};

        % =====================================================================
        % LEVEL 1 (z=zz): L-shaped region (bottom block [0,4]x[0,2] + top-left cell
        % [0,2]x[2,4]), size-1 cells. The two bottom-left cells [0,2]x[0,1] are
        % refined again -> inactive (gray). The level 0/1 refinement edge (E_1) runs
        % along the staircase boundary with the level-0 cells.
        % =====================================================================
        \def\z{\zz}
        \mgfoot{\z}
        \foreach \c/\r in {2/0,3/0,0/1,1/1,2/1,3/1,0/2,1/2,0/3,1/3} \mgcell{\c}{\r}{1}{\z}{act}
        \mgcell{0}{0}{1}{\z}{inact}
        \mgcell{1}{0}{1}{\z}{inact}
        \mgseg{4}{0}{4}{2}{\z}{redge}          % E_1 along u=4
        \mgseg{2}{2}{4}{2}{\z}{redge}          % E_1 along v=2 (u in 2..4)
        \mgseg{2}{2}{2}{4}{\z}{redge}          % E_1 along u=2 (v in 2..4)
        \node[lev, anchor=east] at ({0+\sk*\H-0.35},{\mgd*\H+\z}) {$\mathcal{T}_1$};
        \node[green!50!black, font=\large, anchor=west] at ({4.05+\sk*1.0},{\mgd*1.0+\z}) {$E_\ell$};

        % =====================================================================
        % LEVEL 2 (z=2*zz): finest region [0,2]x[0,1], size-1/2 cells (4x2), all
        % active. The edge between level 1 and level L is drawn here (E_L).
        % =====================================================================
        \def\z{2*\zz}
        \mgfoot{\z}
        \foreach \i in {0,1,2,3} \foreach \j in {0,1} {
                        \mgcell{\i*0.5}{\j*0.5}{0.5}{\z}{act}
                }
        \mgseg{0}{1}{2}{1}{\z}{redge}          % E_L along v=1
        \mgseg{2}{0}{2}{1}{\z}{redge}          % E_L along u=2
        \node[lev, anchor=east] at ({0+\sk*\H-0.35},{\mgd*\H+\z}) {$\mathcal{T}_L$};
        \node[green!50!black, font=\large, anchor=west] at ({2.15+\sk*0.5},{\mgd*0.5+\z}) {$E_\ell$};

\end{tikzpicture}
    \caption{Level-wise decomposition of a locally refined active mesh. Left: the composed active mesh
        $\mathcal{T}_{\mathrm{act}}$, with cells of three sizes. Right: its splitting into the per-level meshes
        $\mathcal{T}_0,\mathcal{T}_1,\dots,\mathcal{T}_L$, drawn one above the other in an exploded view. Every active
        cell belongs to one level $\ell$, so the active mesh decomposes into the direct
        sum~\eqref{eq:active_decomposition} of per-level active subspaces; on each level the cells that are refined
        further (inactive there) are shaded gray, while the active cells are white. The refinement edge $E_\ell$ between
        levels $\ell-1$ and $\ell$ is drawn (green) on the finer level $\ell$. The smoother on level $\ell$ acts only on
        the interior DoFs $S_\ell$ of the refined region of that level; the values on $E_\ell$ are interpolated
        coarse-grid data that the smoother must not modify, and the residual crosses $E_\ell$ only through the
        inter-grid transfers.}
    \label{fig:local_mg_levels}
\end{figure}

\paragraph{Level-wise spaces and the active-mesh decomposition.}
For each $\ell \in \{0, \dots, L\}$ let $\mathcal{T}_\ell$ denote the collection of \emph{all} cells of level $\ell$,
regardless of whether they are active or further refined, and let $\mathbb{V}_{\text{cell}, \ell}$ be the associated
cell-wise primal space, with dual $\mathbb{V}_{\text{cell}, \ell}^*$. Within $\mathbb{V}_{\text{cell}, \ell}$ we single
out the \emph{active subspace}
\begin{equation}
    \mathbb{V}_{\text{cell}, \ell}^{\text{act}} := \bigl\{ v \in \mathbb{V}_{\text{cell}, \ell} \,:\, v|_K = 0 \text{ for every refined } K \in \mathcal{T}_\ell \bigr\},
    \label{eq:active_subspace}
\end{equation}
i.e., those cell-wise functions whose support is contained in the union of active cells of level $\ell$. Because every active cell has a unique level, the cell-wise active space decomposes as a level-wise direct sum
\begin{equation}
    \mathbb{V}_{\text{cell}, \text{act}} \;=\; \bigoplus_{\ell=0}^L \mathbb{V}_{\text{cell}, \ell}^{\text{act}},
    \label{eq:active_decomposition}
\end{equation}
with the canonical projections $\Pi_\ell : \mathbb{V}_{\text{cell}, \text{act}} \to \mathbb{V}_{\text{cell}, \ell}^{\text{act}}$ and inclusions $\iota_\ell : \mathbb{V}_{\text{cell}, \ell}^{\text{act}} \hookrightarrow \mathbb{V}_{\text{cell}, \ell}$ given by zero extension to refined cells; we denote its left inverse, which restricts a primal vector to its components on active cells, by $\iota_\ell^\dagger$. By duality, $\iota_\ell^* : \mathbb{V}_{\text{cell}, \ell}^* \to (\mathbb{V}_{\text{cell}, \ell}^{\text{act}})^*$ restricts a dual vector to its components on active cells, and its right inverse (the zero extension of a dual vector defined on active cells) will be denoted $\iota_\ell^{*\dagger}$.

The decomposition~\eqref{eq:active_decomposition} furnishes the bridge between the active mesh and the level-wise
multigrid storage. Given a cell-wise dual vector $\tilde{r}_{\text{act}} \in \mathbb{V}_{\text{cell}, \text{act}}^*$,
such as the unassembled residual of the active problem, the components
\begin{equation}
    \tilde{r}_\ell^{\text{act}} := \tilde{r}_{\text{act}}\big|_{\mathbb{V}_{\text{cell}, \ell}^{\text{act}}} \in (\mathbb{V}_{\text{cell}, \ell}^{\text{act}})^*, \qquad \tilde{r}_\ell := \iota_\ell^{*\dagger} \tilde{r}_\ell^{\text{act}} \in \mathbb{V}_{\text{cell}, \ell}^*,
    \label{eq:level_distribution}
\end{equation}
provide the right-hand side seen by the V-cycle on each level: $\tilde{r}_\ell$ vanishes identically on refined cells of $\mathcal{T}_\ell$ and equals $\tilde{r}_{\text{act}}$ on the active cells of that level. Conversely, given a tuple of level-wise primal corrections $(e_\ell)_{\ell=0}^L \in \prod_\ell \mathbb{V}_{\text{cell}, \ell}$, only the active components $\iota_\ell^\dagger e_\ell \in \mathbb{V}_{\text{cell}, \ell}^{\text{act}}$ are physically meaningful, and the resulting correction on the active mesh is
\begin{equation}
    \delta u_{\text{act}} \;=\; \sum_{\ell=0}^{L} \iota_\ell^\dagger e_\ell \;\in\; \mathbb{V}_{\text{cell}, \text{act}},
    \label{eq:level_gather}
\end{equation}
the sum being direct in view of~\eqref{eq:active_decomposition}. Equations~\eqref{eq:level_distribution} and~\eqref{eq:level_gather} connect the active mesh and the multigrid hierarchy. After the residual has been distributed via~\eqref{eq:level_distribution}, the V-cycle operates within the per-level cell-wise spaces $\mathbb{V}_{\text{cell}, \ell}$.

The complete iteration is summarised in Algorithm~\ref{alg:mg_local_iteration}; the recursive V-cycle invoked from
line~\ref{line:mg_local_call} is detailed in Algorithm~\ref{alg:mg_local_vcycle}.

\begin{algorithm}[H]
    \caption{Local adaptive multigrid iteration on $\mathcal{T}_{\text{act}}$.}
    \label{alg:mg_local_iteration}
    \begin{algorithmic}[1]
        \Require Active iterate $u_{\text{act}}$, active right-hand side $b_{\text{act}}$, finest level $L$.
        \Ensure Updated active iterate $u_{\text{act}}$.
        \State $\tilde{r}_{\text{act}} \gets b_{\text{act}} - A_{\text{act}}\, u_{\text{act}}$ \Comment{Unassembled residual on $\mathcal{T}_{\text{act}}$}
        \State $\tilde{r}_\ell \gets \iota_\ell^{*\dagger} (\tilde{r}_{\text{act}}|_{\mathbb{V}_{\text{cell}, \ell}^{\text{act}}})$ \textbf{ for } $\ell = 0, \dots, L$ \Comment{Level-wise distribution~\eqref{eq:level_distribution}}
        \State $e_\ell \gets 0$ \textbf{ for } $\ell = 0, \dots, L$ \Comment{Level-wise correction iterates}
        \State $e_L \gets MG_{\text{local}}(A_L, \tilde{r}_L, e_L, L)$ \label{line:mg_local_call} \Comment{Recursive V-cycle, Algorithm~\ref{alg:mg_local_vcycle}}
        \State $\delta u_{\text{act}} \gets \sum_{\ell=0}^L \iota_\ell^\dagger e_\ell$ \Comment{Level-wise reassembly~\eqref{eq:level_gather}}
        \State $u_{\text{act}} \gets u_{\text{act}} + \delta u_{\text{act}}$
        \State \Return $u_{\text{act}}$
    \end{algorithmic}
\end{algorithm}

\begin{algorithm}[H]
    \caption{Local adaptive multigrid $V$-cycle: $MG_{\text{local}}(A_\ell, b_\ell, u_\ell, \ell)$.}
    \label{alg:mg_local_vcycle}
    \begin{algorithmic}[1]
        \Require Level $\ell$, current iterate $u_\ell$, level right-hand side $b_\ell$ (already containing the contribution from~\eqref{eq:level_distribution}).
        \If{$\ell = 0$}
        \State $u_0 \gets A_0^{-1} b_0$ \Comment{Coarse grid solve}
        \State \Return $u_0$
        \EndIf
        \State $u_\ell \gets \mathcal{S}_\ell^{\nu_1}(A_\ell, b_\ell, u_\ell)$ \Comment{Pre-smoothing with edge constraints}
        \State $r_\ell \gets b_\ell - A_\ell\, u_\ell$ \Comment{Unassembled cell-wise residual}
        \State $b_{\ell-1} \gets b_{\ell-1} + \mathcal{R}_\ell^{\ell-1} r_\ell$ \Comment{Accumulate restricted residual into the level-$\ell-1$ contribution}
        \State $u_{\ell-1} \gets MG_{\text{local}}(A_{\ell-1}, b_{\ell-1}, u_{\ell-1}, \ell - 1)$ \Comment{Recursive coarse grid correction}
        \State $u_\ell \gets u_\ell + \mathcal{P}_{\ell-1}^\ell u_{\ell-1}$ \Comment{Prolongate}
        \State $u_\ell \gets \mathcal{S}_\ell^{\nu_2}(A_\ell, b_\ell, u_\ell)$ \Comment{Post-smoothing with edge constraints}
        \State \Return $u_\ell$
    \end{algorithmic}
\end{algorithm}

In Algorithm~\ref{alg:mg_local_vcycle} the level quantities $b_{\ell-1}$ and $u_{\ell-1}$ refer to the shared per-level
states initialized in Algorithm~\ref{alg:mg_local_iteration}: $b_{\ell-1}$ enters carrying the level-$(\ell-1)$
component $\tilde{r}_{\ell-1}$ of the distribution~\eqref{eq:level_distribution}, and $u_{\ell-1}$ is the level
correction iterate $e_{\ell-1}$, initialized to zero, that the outer iteration later reassembles
via~\eqref{eq:level_gather}. The accumulating update $b_{\ell-1} \gets b_{\ell-1} + \mathcal{R}_\ell^{\ell-1} r_\ell$
on line~7 of Algorithm~\ref{alg:mg_local_vcycle} reflects the additive nature of the
decomposition~\eqref{eq:active_decomposition}: on entry the level-wise right-hand side $b_{\ell-1}$ is already
non-zero, carrying the component of $\tilde{r}_{\text{act}}$ supported on $\mathbb{V}_{\text{cell},
            \ell-1}^{\text{act}}$ that was placed there by~\eqref{eq:level_distribution}, and the restricted fine-grid residual
must be superposed on it rather than overwriting it. Equivalently, the recursion can be read as solving the global
active-grid problem by additive subspace correction along~\eqref{eq:active_decomposition}, with the smoother on level
$\ell$ acting only on the subspace $\mathbb{V}_{\text{cell}, \ell}^{\text{act}}$ that physically lives on that level.

In a classically assembled framework the irregularities at refinement edges would force the introduction of dedicated
edge operators and an explicit hanging-node constraint matrix. The remainder of this subsection establishes the two
cell-wise counterparts that replace them: a refinement-edge mask on the smoother's update (\secref{sec:edge_masking}),
and an algebraic equivalence of the unmasked cell-wise residual restriction with the classical constrained restriction
(\secref{sec:residual_equivalence}).

\subsubsection{Level-Wise Masking and Edge Constraints}
\label{sec:edge_masking}
Within the global space $\mathbb{V}_{\ell}$ defined over $\mathcal{T}_\ell$, the degrees of freedom (DoFs) can be partitioned into two mutually exclusive sets:
\begin{itemize}
    \item \textbf{Subdomain Interior ($S_\ell$):} DoFs belonging to the interior of the level-$\ell$ subdomain $\Omega_\ell := \bigcup_{K \in \mathcal{T}_\ell} K$, the region covered by \emph{all} cells of level $\ell$, whether active or further refined. The smoother acts on the whole of $S_\ell$, not merely on the DoFs of active cells; this matches the local-smoothing formulation of~\cite{JanssenKanschat11}.
    \item \textbf{Refinement Edge ($E_\ell$):} DoFs located on the inter-level boundary $\partial \Omega_\ell \setminus \partial \Omega$, representing hanging nodes that connect level $\ell$ to the coarser level $\ell-1$. DoFs on the physical Dirichlet boundary $\partial \Omega$ are excluded: they are eliminated in the usual way and belong to neither set.
\end{itemize}
This partition induces a block structure for the classically assembled discrete level operator $A_\ell$:
\begin{equation}
    A_\ell = \begin{pmatrix} A^{SS} & A^{SE} \\ A^{ES} & A^{EE} \end{pmatrix}.
\end{equation}

In a classical algebraic setting, the local smoothing procedure relies on explicitly separating these blocks. Following
prolongation, the refinement edge DoFs receive interpolated coarse-grid values, denoted $u^E$. To smooth the interior,
this boundary data is treated as fixed Dirichlet constraints. The inward coupling is explicitly computed and moved to
the right-hand side, forcing the smoother to act exclusively on the interior matrix block aiming to solve $A^{SS} u^S =
    f^S - A^{SE} u^E$. After the interior solution $u^S$ is updated, the residual on the refinement edge must be evaluated
to propagate the fine-grid error back to the coarse level. This requires an explicit edge operator to compute the
outward coupling fluxes $r^E = f^E - A^{ES} u^S - A^{EE} u^E$. This edge residual is then restricted to drive the
subsequent coarse-grid correction.

In the matrix-free cell-wise representation, we evaluate the full local volumetric operator $A_{\text{cell}, \ell}
    \colon \mathbb{V}_{\text{cell}, \ell} \to \mathbb{V}_{\text{cell}, \ell}^*$ in a single pass over the elements in
$\mathcal{T}_\ell$. To recover local smoothing in this cell-wise setting, we introduce the interior-DoF subspace
$\mathbb{V}_{\text{cell}, \ell}^{S_\ell} \subset \mathbb{V}_{\text{cell}, \ell}$ of cell-wise functions vanishing on
the refinement edge $E_\ell$, together with the canonical primal inclusion and its formal adjoint
\begin{equation}
    \iota_{S_\ell} \colon \mathbb{V}_{\text{cell}, \ell}^{S_\ell} \hookrightarrow \mathbb{V}_{\text{cell}, \ell}, \qquad \iota_{S_\ell}^* \colon \mathbb{V}_{\text{cell}, \ell}^* \to \bigl(\mathbb{V}_{\text{cell}, \ell}^{S_\ell}\bigr)^*,
    \label{eq:edge_mask_inclusion}
\end{equation}
in line with the convention used for the active-subspace pair $(\iota_\ell, \iota_\ell^*)$: the primal map $\iota_{S_\ell}$ extends a vector by zero on $E_\ell$ (thereby imposing the homogeneous Dirichlet condition that defines local smoothing), while the dual map $\iota_{S_\ell}^*$ restricts a level dual vector to its components on $S_\ell$. The two maps will be applied on opposite sides of the smoothing preconditioner: $\iota_{S_\ell}^*$ on the dual residual and $\iota_{S_\ell}$ on the primal correction, yielding the local relaxation operator described in \secref{sec:smoother_construction}.

Conceptually, the dual-side restriction $\iota_{S_\ell}^*$ discards the outward fluxes accumulated on $E_\ell$ that are
reserved for the coarse-grid correction, and the primal-side inclusion $\iota_{S_\ell}$ enforces the inter-level
Dirichlet boundary condition by re-inserting zero on those entries. Because the full volumetric operator reads the
non-zero boundary values $u^E$ during the cell integration loop, it evaluates the inward coupling ($A^{SE}$) on the
fly, and the local relaxation acts on the interior ($A^{SS}$). By evaluating the unmasked residual using the full
operator at the end of the smoothing step, the outward fine-to-coarse fluxes ($A^{ES}$) accumulate at the refinement
edges.

\subsubsection{Equivalence of the Unassembled Transfer and Residual Operations}
\label{sec:residual_equivalence}

Having established the topological mapping between the classical block operators and our cell-wise evaluation strategy,
we now demonstrate that the geometric, cell-wise transfer operations enforce the required constraints and correctly
route the residual fluxes. Let us first show that the geometrically prolonged field is continuous across the
inter-level refinement edge.

\begin{lemma}[Continuity of the Prolonged Field]
    \label{lem:prolonged_field_continuity}
    Let $e_{\ell-1} \in \mathbb{V}_{\text{cell}, \ell-1}$ be a continuous coarse-grid field. The geometrically prolonged fine-grid field $e_\ell = \mathcal{P}_{\text{cell}, \ell-1}^\ell e_{\ell-1}$ is $C^0$-continuous across the inter-level refinement edge $E_\ell$.
\end{lemma}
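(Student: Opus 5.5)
The plan is to reduce the statement to the commutativity of Lemma~\ref{lem:prolongation_commutativity}, applied on the level mesh $\mathcal{T}_\ell$ (all cells of level $\ell$, active or refined). On that mesh, $E_\ell$ lies on the boundary $\partial\Omega_\ell\setminus\partial\Omega$. So ``continuity across $E_\ell$'' has two parts to verify: (i) the fine cell-wise field $e_\ell$ is single-valued on every interface shared by two cells of $\mathcal{T}_\ell$, including interfaces meeting $E_\ell$; (ii) the trace of $e_\ell$ on $E_\ell$ coincides with the trace of the coarse field $e_{\ell-1}$ on the neighbouring level-$(\ell-1)$ cells across $E_\ell$. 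Part (ii) is precisely the hanging-node constraint, so it is the part that matters.

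For step (i), since $e_{\ell-1}$ is continuous we write $e_{\ell-1}=\mathcal{G}_{\ell-1}u_{CG,\ell-1}$. Lemma~\ref{lem:prolongation_commutativity} then gives $e_\ell=\mathcal{G}_\ell\,\mathcal{P}_{CG,\ell-1}^\ell u_{CG,\ell-1}$. This lies in $\operatorname{Im}(\mathcal{G}_\ell)$ and is therefore continuous across every interior face of $\mathcal{T}_\ell$.

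For step (ii) I would argue directly with polynomials. Fix a fine face $F\subset E_\ell$ belonging to a child $K_c$ of a parent $K\in\mathcal{T}_{\ell-1}$. By Algorithm~\ref{alg:prolongation}, $e_\ell|_{K_c}$ holds the Gauss--Lobatto nodal values of the restriction to $K_c$ of the polynomial $q_K\in\mathbb{Q}_p(K)$ represented by $e_{\ell-1}|_K$. This holds because $\bigotimes_i\mathcal{P}_{b_i}$ is exactly nodal interpolation of $q_K$ at the child nodes, and interpolation is exact on $\mathbb{Q}_p(K_c)\supset q_K|_{K_c}$. Hence the trace of $e_\ell$ on $F$ equals $q_K|_F$. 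The face $F$ lies in a coarse face $\hat F=K\cap K'$ with $K'\in\mathcal{T}_{\ell-1}$ unrefined. Continuity of $e_{\ell-1}$ gives $q_K|_{\hat F}=q_{K'}|_{\hat F}$, so the fine trace on $F$ equals $q_{K'}|_F$. Thus the fine and coarse sides agree pointwise along $E_\ell$, which is $C^0$-continuity. In particular, the fine nodal values on $E_\ell$ are exactly the values the classical hanging-node constraint would assign.

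The main obstacle is the interpretation of ``continuity across $E_\ell$''. The level-$\ell$ vector does not store the coarse neighbour $K'$, so the statement must be read as agreement of traces with the coarse field on $\mathcal{T}_{\ell-1}$. I would state this reading explicitly. I would also note the standing assumptions: the mapping is affine or multilinear and tensor-product, so that the child nodes are images of reference-cell nodes and $\mathbb{Q}_p$ is preserved under restriction to children; and continuity of $e_{\ell-1}$ in the sense $e_{\ell-1}\in\operatorname{Im}(\mathcal{G}_{\ell-1})$ includes its own constraints if $\mathcal{T}_{\ell-1}$ is itself non-conforming.
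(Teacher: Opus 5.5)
Your proposal is correct and makes the paper's argument precise. The paper likewise treats $\mathcal{P}_{\text{cell},\ell-1}^\ell$ as the exact polynomial embedding and concludes that the fine values on $E_\ell$ are traces of the coarse polynomial, so they agree with the adjacent coarse field by continuity of $e_{\ell-1}$. Your step (i), via Lemma~\ref{lem:prolongation_commutativity}, additionally spells out the within-level continuity that Theorem~\ref{thm:mg_local_equivalence} later draws from this lemma.

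One hypothesis is missing from your list, and the paper leaves it implicit as well: the active mesh must be one-irregular (2:1 face balanced). That is what guarantees a level-$(\ell-1)$ neighbour $K'$ across $\hat F$ exists at all; without it, $\hat F$ can lie on $E_{\ell-1}$ and your step (ii) fails. Your hedge about $\mathcal{T}_{\ell-1}$ being non-conforming is moot, since the paper notes that every level mesh is conforming.
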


\begin{proof}
    The cell-wise prolongation operator $\mathcal{P}_{\text{cell}, \ell-1}^\ell$ evaluates the polynomial embedding from the coarse finite element space into the fine space. Because the input field $e_{\ell-1}$ is continuous across coarse element interfaces, its geometric interpolation onto the refinement edge $E_\ell$ yields uniquely defined polynomial values. Consequently, the resulting fine-grid degrees of freedom evaluated on $E_\ell$ match the interpolation of the adjacent continuous coarse field, ensuring $C^0$-continuity across the non-conforming interface.
\end{proof}

% Characterize the hanging node constraint operator by its defining property.
In classical algebraically assembled formulations, inter-level continuity at non-conforming interfaces is maintained
through an explicit constraint operator $C_{HN} \colon \mathbb{V}_{CG, \ell} \to \mathbb{V}_{CG, \ell}$. A remark on
its domain is in order: each level mesh $\mathcal{T}_\ell$ is conforming, so within $\mathbb{V}_{CG, \ell}$ itself no
degree of freedom is constrained; the hanging nodes are a property of the \emph{active} mesh, on which the level-$\ell$
DoFs lying on $E_\ell$ are dependent. The classical local multigrid method~\cite{JanssenKanschat11} therefore applies,
on the level space, the edge-constraint operator that acts as the identity on all DoFs in $S_\ell$ and replaces the
values on $E_\ell$ with the exact polynomial interpolation of the supporting coarse degrees of freedom. Rather than
fixing a particular matrix representation, we characterize $C_{HN}$ by the single property that our equivalence proof
requires: for any continuous coarse field $v_{CG, \ell-1} \in \mathbb{V}_{CG, \ell-1}$,
\begin{equation}
    C_{HN} \mathcal{P}_{CG, \ell-1}^\ell v_{CG, \ell-1} = \mathcal{P}_{CG, \ell-1}^\ell v_{CG, \ell-1}.
    \label{eq:constraints_prolongation}
\end{equation}
Property~\eqref{eq:constraints_prolongation} states that prolonged coarse fields are invariant under the constraints,
and it holds for the edge-constraint operator of~\cite{JanssenKanschat11} because, by
Lemma~\ref{lem:prolonged_field_continuity}, the values that the geometric prolongation places on $E_\ell$ \emph{are}
the exact interpolation of the supporting coarse DoFs, leaving no constraints to enforce. Everything below
uses only~\eqref{eq:constraints_prolongation}, so the results apply verbatim to any classical formulation whose
constraint operator satisfies it.

Transferring the residual to coarser grid requires computing the outward coupling fluxes ($r^E = f^E - A^{ES} u^S -
    A^{EE} u^E$) and applying the transposed constraint matrix $C_{HN}^T \colon \mathbb{V}_{CG, \ell}^* \to \mathbb{V}_{CG,
        \ell}^*$ to accumulate these edge fluxes onto the supporting coarse-grid DoFs. In our cell-wise approach, we bypass
both the explicit edge operator evaluation and the constraint matrix.

\begin{lemma}[Equivalence of the Residual Transfer]
    \label{lem:constrained_residual}
    Let $C_{HN}$ satisfy~\eqref{eq:constraints_prolongation}. Then the operator identity
    \begin{equation}
        \mathcal{G}_{\ell-1}^T \mathcal{R}_{\text{cell}, \ell}^{\ell-1} = \mathcal{R}_{CG, \ell}^{\ell-1} C_{HN}^T \mathcal{G}_\ell^T
    \end{equation}
    holds on all of $\mathbb{V}_{\text{cell}, \ell}^*$. In particular, for the unmasked cell-wise residual $\tilde{r}_\ell = \tilde{b}_\ell - A_{\text{cell}, \ell} u_\ell$ evaluated after local smoothing, the global assembly of its cell-wise restriction equals the result of assembling the fine-grid residual, applying the transposed hanging-node constraint operator $C_{HN}^T$, and applying the classical global restriction $\mathcal{R}_{CG, \ell}^{\ell-1}$.
\end{lemma}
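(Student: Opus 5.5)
The plan is to reduce the claim to the already established transfer identity of Lemma~\ref{lem:restriction_equivalence} and then absorb $C_{HN}^T$ using the invariance property~\eqref{eq:constraints_prolongation}. No new analysis of the cell-wise operators is needed beyond these two ingredients.

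The first step is to transpose the invariance property. Since~\eqref{eq:constraints_prolongation} holds for every $v_{CG,\ell-1} \in \mathbb{V}_{CG,\ell-1}$, it is an operator identity,
\begin{equation*}
    C_{HN}\, \mathcal{P}_{CG,\ell-1}^\ell = \mathcal{P}_{CG,\ell-1}^\ell .
\end{equation*}
Transposing it, and using the definition $\mathcal{R}_{CG,\ell}^{\ell-1} = (\mathcal{P}_{CG,\ell-1}^\ell)^T$ from Lemma~\ref{lem:restriction_equivalence}, gives
\begin{equation*}
    \mathcal{R}_{CG,\ell}^{\ell-1} C_{HN}^T = \mathcal{R}_{CG,\ell}^{\ell-1}
\end{equation*}
as maps $\mathbb{V}_{CG,\ell}^* \to \mathbb{V}_{CG,\ell-1}^*$. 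In words, the classical restriction annihilates whatever $C_{HN}^T$ changes.

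The second step is to compose this identity on the right with $\mathcal{G}_\ell^T$ and then invoke Lemma~\ref{lem:restriction_equivalence}:
\begin{equation*}
    \mathcal{R}_{CG,\ell}^{\ell-1} C_{HN}^T \mathcal{G}_\ell^T = \mathcal{R}_{CG,\ell}^{\ell-1}\mathcal{G}_\ell^T = \mathcal{G}_{\ell-1}^T \mathcal{R}_{\text{cell},\ell}^{\ell-1}.
\end{equation*}
Both sides are linear maps defined on all of $\mathbb{V}_{\text{cell},\ell}^*$, so the identity holds there without any hypothesis on the argument. The ``in particular'' clause then follows by applying the identity to $\tilde{r}_\ell = \tilde{b}_\ell - A_{\text{cell},\ell}u_\ell$, with $\mathcal{G}_\ell^T\tilde{r}_\ell$ being the assembled fine residual. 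Because the identity is unconditional, it does not matter that $\tilde r_\ell$ is unmasked or that smoothing has occurred. The outward edge fluxes $A^{ES}u^S$ are contained in $\tilde r_\ell$ automatically, since the full volumetric operator was evaluated.

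The main obstacle is bookkeeping of domains rather than any estimate. Lemma~\ref{lem:prolongation_commutativity}, and hence Lemma~\ref{lem:restriction_equivalence}, must be valid on the level meshes $\mathcal{T}_\ell$ of the adaptive hierarchy, which cover only $\Omega_\ell$. Concretely, $\mathcal{P}_{CG,\ell-1}^\ell$ must mean the embedding restricted to $\Omega_\ell$, followed by restriction of the result to the level-$\ell$ DoFs, including those on $E_\ell$. I would check that each level mesh is conforming, as remarked before~\eqref{eq:constraints_prolongation}, and that each cell of $\mathcal{T}_\ell$ has its parent in $\mathcal{T}_{\ell-1}$. With these two facts the cell-local argument of Lemma~\ref{lem:prolongation_commutativity} goes through unchanged. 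I would also make sure that $C_{HN}$ is understood as acting on $\mathbb{V}_{CG,\ell}$, so that the transposition in the first step is type-correct.
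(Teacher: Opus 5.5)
Your argument is correct and is essentially the paper's proof. The paper tests both sides against an arbitrary $v_{CG,\ell-1}$, uses~\eqref{eq:constraints_prolongation} to replace $C_{HN}\mathcal{P}_{CG,\ell-1}^\ell v_{CG,\ell-1}$ by $\mathcal{P}_{CG,\ell-1}^\ell v_{CG,\ell-1}$, and concludes with Lemma~\ref{lem:prolongation_commutativity}; this is your transposed identity $\mathcal{R}_{CG,\ell}^{\ell-1}C_{HN}^T=\mathcal{R}_{CG,\ell}^{\ell-1}$ followed by Lemma~\ref{lem:restriction_equivalence}, written with duality pairings instead of operator transposition. Your remark that the commutativity lemma has to hold on the adaptive level meshes $\mathcal{T}_\ell$ (each conforming, each cell with its parent in $\mathcal{T}_{\ell-1}$) is something the paper uses without stating it.
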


\begin{proof}
    Let $\tilde{r}_\ell \in \mathbb{V}_{\text{cell}, \ell}^*$ be arbitrary. To prove the identity, we evaluate the action of both sides on an arbitrary continuous coarse-grid test vector $v_{CG, \ell-1} \in \mathbb{V}_{CG, \ell-1}$ using duality pairings; since both sides are elements of $\mathbb{V}_{CG, \ell-1}^*$, agreement of all such pairings determines them completely.

    First, we consider the classical constrained restriction (the right-hand side). By sequentially applying the
    definitions of the adjoint operators (the global restriction $\mathcal{R}_{CG, \ell}^{\ell-1} = (\mathcal{P}_{CG,
            \ell-1}^\ell)^T$, the transposed constraint matrix $C_{HN}^T$, and the scatter operator $\mathcal{G}_\ell^T$), we
    transfer the operations from the dual space to the primal space:
    \begin{align*}
        \langle \mathcal{R}_{CG, \ell}^{\ell-1} C_{HN}^T \mathcal{G}_\ell^T \tilde{r}_\ell, v_{CG, \ell-1} \rangle_{CG, \ell-1}
         & = \langle C_{HN}^T \mathcal{G}_\ell^T \tilde{r}_\ell, \mathcal{P}_{CG, \ell-1}^\ell v_{CG, \ell-1} \rangle_{CG, \ell} \\
         & = \langle \mathcal{G}_\ell^T \tilde{r}_\ell, C_{HN} \mathcal{P}_{CG, \ell-1}^\ell v_{CG, \ell-1} \rangle_{CG, \ell}   \\
         & = \langle \tilde{r}_\ell, \mathcal{G}_\ell \mathcal{P}_{CG, \ell-1}^\ell v_{CG, \ell-1} \rangle_{\text{cell}, \ell},
    \end{align*}
    where we have explicitly utilized the identity from Eq.~\eqref{eq:constraints_prolongation} ($C_{HN} \mathcal{P}_{CG, \ell-1}^\ell = \mathcal{P}_{CG, \ell-1}^\ell$) in the final step. The resulting primal vector in the right argument, $\mathcal{G}_\ell \mathcal{P}_{CG, \ell-1}^\ell v_{CG, \ell-1}$, represents the coarse test vector prolonged to the global fine space, implicitly satisfying the constraints, and finally gathered into the cell-wise representation.

    Next, we evaluate the cell-wise restriction (the left-hand side). Applying the adjoint definitions for the coarse
    scatter operator $\mathcal{G}_{\ell-1}^T$ and the cell-wise restriction $\mathcal{R}_{\text{cell}, \ell}^{\ell-1} =
        (\mathcal{P}_{\text{cell}, \ell-1}^\ell)^T$ yields:
    \begin{align*}
        \langle \mathcal{G}_{\ell-1}^T \mathcal{R}_{\text{cell}, \ell}^{\ell-1} \tilde{r}_\ell, v_{CG, \ell-1} \rangle_{CG, \ell-1}
         & = \langle \mathcal{R}_{\text{cell}, \ell}^{\ell-1} \tilde{r}_\ell, \mathcal{G}_{\ell-1} v_{CG, \ell-1} \rangle_{\text{cell}, \ell-1} \\
         & = \langle \tilde{r}_\ell, \mathcal{P}_{\text{cell}, \ell-1}^\ell \mathcal{G}_{\ell-1} v_{CG, \ell-1} \rangle_{\text{cell}, \ell}.
    \end{align*}
    The primal vector here, $\mathcal{P}_{\text{cell}, \ell-1}^\ell \mathcal{G}_{\ell-1} v_{CG, \ell-1}$, is obtained by gathering the coarse vector into the cell-wise format and applying the geometric tensor-product prolongation.

    Because the operations of gathering and prolonging commute (Lemma~\ref{lem:prolongation_commutativity}), the two
    cell-wise primal test vectors are identical:
    \begin{equation*}
        \mathcal{P}_{\text{cell}, \ell-1}^\ell \mathcal{G}_{\ell-1} v_{CG, \ell-1} = \mathcal{G}_\ell \mathcal{P}_{CG, \ell-1}^\ell v_{CG, \ell-1}.
    \end{equation*}
    Since the duality pairings are equal for any arbitrary test vector $v_{CG, \ell-1}$, the operator identity holds. This proves that the local cell-wise restriction of the unmasked residual replicates the classical explicitly constrained residual transfer.
\end{proof}

Note that we have not formed the constraint matrix $C_{HN}$ explicitly; we needed only the property of the constraint
matrix and its connection with the geometric prolongation, Equation~(\ref{eq:constraints_prolongation}).

\subsubsection{Equivalence with Classical Local Multigrid}
\label{sec:mg_local_equivalence}

We now combine the two lemmas above to identify Algorithm~\ref{alg:mg_local_vcycle} with the classical local multigrid
V-cycle on conforming finite element spaces studied in~\cite{JanssenKanschat11}. The only ingredient that is not
already covered by the inter-grid transfer equivalences is the smoother, whose action must agree with that of a
classical local smoother once we identify cell-wise iterates with their continuous representatives via the gather
operator $\mathcal{G}_\ell$. We make this precise as a single assumption.

\begin{assumption}[Smoother equivalence]
    \label{ass:smoother_equivalence}
    There exists a classical local level smoother $\mathcal{S}_\ell^{CG}$, mapping a triple of level operator, assembled right-hand side and iterate $(A_{CG,\ell},\, b_{CG,\ell},\, u_{CG,\ell})$ to an updated iterate in $\mathbb{V}_{CG,\ell}$,
    in the sense of~\cite{JanssenKanschat11} (a relaxation that acts on the interior degrees of freedom $S_\ell$ and leaves the refinement-edge values $E_\ell$ untouched) such that, for every continuous iterate $u_{CG} \in \mathbb{V}_{CG,\ell}$ and every unassembled dual vector $\tilde{b}_\ell \in \mathbb{V}_{\text{cell},\ell}^*$,
    \begin{equation}
        \mathcal{S}_\ell\bigl(A_\ell,\, \tilde{b}_\ell,\, \mathcal{G}_\ell u_{CG}\bigr) \;=\; \mathcal{G}_\ell\, \mathcal{S}_\ell^{CG}\bigl(A_{CG,\ell},\, \mathcal{G}_\ell^T \tilde{b}_\ell,\, u_{CG}\bigr).
        \label{eq:smoother_equivalence}
    \end{equation}
\end{assumption}

In words, applied to a globally continuous iterate the cell-wise smoother $\mathcal{S}_\ell$ produces an output that
lies in the range of $\mathcal{G}_\ell$ and gathers to the classical local smoother's output. The relaxation built from
a masked Jacobi preconditioner in the next subsection is shown there to satisfy~\eqref{eq:smoother_equivalence}.

\begin{theorem}[Equivalence of cell-wise and classical local multigrid]
    \label{thm:mg_local_equivalence}
    Let Assumption~\ref{ass:smoother_equivalence} hold, and let $\mathrm{MG}_{\text{local}}^{CG}$ denote the classical local V-cycle of~\cite{JanssenKanschat11} built from the operators $A_{CG,\ell}$, the smoother $\mathcal{S}_\ell^{CG}$, the prolongation $\mathcal{P}_{CG,\ell-1}^\ell$, the restriction $\mathcal{R}_{CG,\ell}^{\ell-1}$ and a hanging-node constraint operator $C_{HN}$ satisfying~\eqref{eq:constraints_prolongation}. For every level $\ell$, every continuous iterate $u_{CG,\ell} \in \mathbb{V}_{CG,\ell}$ and every unassembled right-hand side $\tilde{b}_\ell \in \mathbb{V}_{\text{cell},\ell}^*$, the cell-wise V-cycle of Algorithm~\ref{alg:mg_local_vcycle} satisfies
    \begin{equation}
        \mathrm{MG}_{\text{local}}\bigl(A_\ell,\, \tilde{b}_\ell,\, \mathcal{G}_\ell u_{CG,\ell},\, \ell\bigr) \;=\; \mathcal{G}_\ell\, \mathrm{MG}_{\text{local}}^{CG}\bigl(A_{CG,\ell},\, \mathcal{G}_\ell^T \tilde{b}_\ell,\, u_{CG,\ell},\, \ell\bigr).
        \label{eq:mg_equivalence}
    \end{equation}
    Consequently, the iteration matrix of the cell-wise V-cycle, restricted to the gathered subspace $\mathcal{G}_\ell \mathbb{V}_{CG,\ell} \subset \mathbb{V}_{\text{cell},\ell}$, is similar to that of $\mathrm{MG}_{\text{local}}^{CG}$: any convergence estimate that holds for the classical local V-cycle built from $\mathcal{S}_\ell^{CG}$ transfers verbatim to Algorithm~\ref{alg:mg_local_vcycle}.
\end{theorem}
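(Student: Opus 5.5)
The plan is induction on the level $\ell$, tracking the cell-wise V-cycle of Algorithm~\ref{alg:mg_local_vcycle} line by line against its classical counterpart. The invariant is the one stated in~\eqref{eq:mg_equivalence}:
\begin{itemize}
\item every primal quantity equals $\mathcal{G}_\ell$ applied to its classical counterpart;
\item every dual quantity assembles, under $\mathcal{G}_\ell^T$, to its classical counterpart.
\end{itemize}
Because the recursion in Algorithm~\ref{alg:mg_local_vcycle} reads the accumulated level right-hand side $b_{\ell-1}$, the induction hypothesis will be the full statement~\eqref{eq:mg_equivalence} at level $\ell-1$, quantified over every unassembled right-hand side and every continuous iterate.

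\textbf{Base case $\ell=0$.} Here there is no refinement edge, and the coarse solve must reproduce the classical one. I would show this by reading $A_0^{-1}$ in the cell-wise setting as the preconditioner of the form~\eqref{eq:precond_consistency} with $P_{CG}=A_{CG,0}^{-1}$. This gives $\mathcal{G}_0 A_{CG,0}^{-1}\mathcal{G}_0^T\tilde b_0$, which is exactly $\mathcal{G}_0$ applied to the classical coarse solve with the assembled right-hand side.

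\textbf{Inductive step.} Let $\ell\ge1$ and start from $u_\ell=\mathcal{G}_\ell u_{CG,\ell}$.
\begin{enumerate}
\item Pre-smoothing is covered by Assumption~\ref{ass:smoother_equivalence}, applied $\nu_1$ times. Each application returns a gathered vector, so the hypothesis of the assumption is restored before the next application. The result is $\mathcal{G}_\ell u^{(1)}_{CG}$, where $u^{(1)}_{CG}$ is the classical pre-smoothed iterate.
\item For the residual I use $A_{CG,\ell}=\mathcal{G}_\ell^T A_\ell\mathcal{G}_\ell$. The unassembled residual $\tilde r_\ell=\tilde b_\ell-A_\ell\mathcal{G}_\ell u^{(1)}_{CG}$ then assembles to the classical unmasked residual $r_{CG}=\mathcal{G}_\ell^T\tilde b_\ell-A_{CG,\ell}u^{(1)}_{CG}$. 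This residual includes the edge fluxes $A^{ES}u^S+A^{EE}u^E$ discussed in \secref{sec:edge_masking}.
\item For the restriction, Lemma~\ref{lem:constrained_residual} gives $\mathcal{G}_{\ell-1}^T\mathcal{R}_{\text{cell}}\tilde r_\ell=\mathcal{R}_{CG}C_{HN}^T r_{CG}$. By linearity of $\mathcal{G}_{\ell-1}^T$, the accumulated right-hand side $b_{\ell-1}+\mathcal{R}_{\text{cell}}\tilde r_\ell$ assembles to the classical accumulated coarse right-hand side.
\item The incoming coarse iterate $u_{\ell-1}$ is zero, or more generally gathered, so the induction hypothesis applies. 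The returned coarse iterate is $\mathcal{G}_{\ell-1}$ applied to the classical coarse result.
\item Lemma~\ref{lem:prolongation_commutativity} converts the prolonged correction into $\mathcal{G}_\ell\mathcal{P}_{CG}(\cdot)$. By~\eqref{eq:constraints_prolongation}, this coincides with the classical constrained prolongation $C_{HN}\mathcal{P}_{CG}$. Since $\mathcal{G}_\ell$ is linear, the sum with the current iterate stays gathered.
\item Post-smoothing is handled again by Assumption~\ref{ass:smoother_equivalence}. This closes the induction.
\end{enumerate}

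\textbf{Main obstacle.} The delicate point is the bookkeeping of the shared per-level states: $b_{\ell-1}$ already carries $\tilde r_{\ell-1}$ from~\eqref{eq:level_distribution}, and $u_{\ell-1}$ is a persistent level iterate. The statement must therefore be applied with these as the actual arguments, not with a freshly zeroed coarse problem. I would first check that the classical formulation of~\cite{JanssenKanschat11} accumulates the distributed active residual in exactly the same way. If so, the identification only requires the linearity of $\mathcal{G}_{\ell-1}^T$ on sums of dual vectors. I would also confirm that the classical method uses $C_{HN}^T$ only in the restriction, with the prolongation left as plain $\mathcal{P}_{CG}$ or as $C_{HN}\mathcal{P}_{CG}$, which are equal by~\eqref{eq:constraints_prolongation}.

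\textbf{Consequence.} Both V-cycles are affine in $(u,b)$, so~\eqref{eq:mg_equivalence} gives the intertwining $M_{\text{cell}}\mathcal{G}_\ell=\mathcal{G}_\ell M_{CG}$ for the error propagation operators. Since $\mathcal{G}_\ell$ is injective, it is an isomorphism onto its image $\mathcal{G}_\ell\mathbb{V}_{CG,\ell}$. Hence the restriction of $M_{\text{cell}}$ to this image is similar to $M_{CG}$, and any convergence estimate for the classical method transfers, with norms taken through this isomorphism.
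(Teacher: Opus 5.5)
Your proposal is correct and follows essentially the same route as the paper's proof. Both argue by induction over the levels, substitute line by line the prolongation intertwining (Lemma~\ref{lem:prolongation_commutativity} together with~\eqref{eq:constraints_prolongation}), the restriction identity of Lemma~\ref{lem:constrained_residual} and Assumption~\ref{ass:smoother_equivalence}, and then conjugate through the injective $\mathcal{G}_\ell$. Two of your steps are more careful than the paper. Your base case reads the coarse solve as $\mathcal{G}_0 A_{CG,0}^{-1}\mathcal{G}_0^T$, whereas the paper simply asserts that $\mathcal{G}_0$ is a bijection, which fails whenever coarse cells share degrees of freedom (e.g.\ the seven-cell ball). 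You also track the shared per-level states $b_{\ell-1}$, $u_{\ell-1}$ explicitly, which the paper leaves implicit.
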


\begin{proof}
    Induction on $\ell$. The base case $\ell = 0$ is immediate: the coarse-level solve $u_0 \gets A_0^{-1} b_0$ acts on the conforming coarsest space, where $\mathcal{G}_0$ is a bijection and the cell-wise and classical operators coincide.

    Assume~\eqref{eq:mg_equivalence} holds at level $\ell-1$. Lemma~\ref{lem:prolonged_field_continuity} states that the
    cell-wise prolongation produces $C^0$-continuous fields, hence $\mathcal{P}_{\text{cell},\ell-1}^\ell$ maps
    $\mathcal{G}_{\ell-1} \mathbb{V}_{CG,\ell-1}$ into $\mathcal{G}_\ell \mathbb{V}_{CG,\ell}$, and
    Lemma~\ref{lem:prolongation_commutativity} gives the intertwining $\mathcal{P}_{\text{cell},\ell-1}^\ell
        \mathcal{G}_{\ell-1} = \mathcal{G}_\ell \mathcal{P}_{CG,\ell-1}^\ell$. Lemma~\ref{lem:constrained_residual} provides
    the dual identity $\mathcal{G}_{\ell-1}^T \mathcal{R}_{\text{cell},\ell}^{\ell-1} = \mathcal{R}_{CG,\ell}^{\ell-1}
        C_{HN}^T \mathcal{G}_\ell^T$, valid on all of $\mathbb{V}_{\text{cell},\ell}^*$ and in particular on the unassembled
    residual evaluated after smoothing. Finally, Assumption~\ref{ass:smoother_equivalence} provides the matching identity
    for the pre- and post-smoothing steps. Substituting these three identities line by line into
    Algorithm~\ref{alg:mg_local_vcycle} reproduces, after gathering, the corresponding lines of the classical local V-cycle
    of~\cite{JanssenKanschat11} applied to $(A_{CG,\ell}, \mathcal{G}_\ell^T \tilde{b}_\ell, u_{CG,\ell})$, completing the
    induction.

    Convergence estimates for the classical method are stated as norm bounds on the iteration matrix of
    $\mathrm{MG}_{\text{local}}^{CG}$ acting on $\mathbb{V}_{CG,\ell}$; by~\eqref{eq:mg_equivalence} the cell-wise
    iteration matrix on $\mathcal{G}_\ell \mathbb{V}_{CG,\ell}$ is conjugate to it via $\mathcal{G}_\ell$ and therefore
    inherits the same spectral bounds.
\end{proof}

\begin{corollary}[Equivalence of the full iteration]
    \label{cor:full_iteration_equivalence}
    Under the hypotheses of Theorem~\ref{thm:mg_local_equivalence}, one step of Algorithm~\ref{alg:mg_local_iteration}
    applied to a continuous active iterate coincides, after gathering, with one step of the classical local multigrid
    iteration of~\cite{JanssenKanschat11} on the assembled active-mesh system. In particular, the reassembled correction
    $\delta u_{\text{act}}$ of~\eqref{eq:level_gather} satisfies the hanging-node constraints of the active mesh: on
    every refinement edge the fine-side values are those placed by the prolongation of the coarse-level correction,
    which by Lemma~\ref{lem:prolonged_field_continuity} interpolate the coarse field, while the level smoothers
    leave $E_\ell$ untouched by construction.
\end{corollary}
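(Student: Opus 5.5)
The proof wraps Theorem~\ref{thm:mg_local_equivalence} with the two bookkeeping steps of Algorithm~\ref{alg:mg_local_iteration}: level-wise distribution of the residual and level-wise reassembly of the corrections.

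\textbf{Step 1: the residual.} Let $u_{\text{act}} = \mathcal{G}_{\text{act}} u_{CG,\text{act}}$ be a continuous active iterate. By the pairing identity~\eqref{eq:pairing_global} and the equivalence of~\eqref{eq:constrained_system} with the assembled system, the unassembled residual $\tilde{r}_{\text{act}}$ assembles (through the constrained gather of the active mesh) to the classical active residual. By~\eqref{eq:active_decomposition}, the distribution~\eqref{eq:level_distribution} splits $\tilde{r}_{\text{act}}$ into disjoint per-level pieces $\tilde{r}_\ell$. After assembly, these are exactly the level right-hand sides of the classical local method of~\cite{JanssenKanschat11}, in which the active residual is also distributed to the level on which each active cell lives.

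\textbf{Step 2: the recursion.} Induct down the levels, exactly as in Theorem~\ref{thm:mg_local_equivalence}. The accumulation $b_{\ell-1} \gets b_{\ell-1} + \mathcal{R}_\ell^{\ell-1} r_\ell$ is linear. Applying $\mathcal{G}_{\ell-1}^T$ and using Lemma~\ref{lem:constrained_residual}, the assembled coarse right-hand side becomes $\mathcal{G}_{\ell-1}^T\tilde{r}_{\ell-1} + \mathcal{R}_{CG,\ell}^{\ell-1} C_{HN}^T \mathcal{G}_\ell^T r_\ell$, which is precisely the classical update. Each level iterate $e_\ell$ starts at $\mathcal{G}_\ell 0$. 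Prolongation is handled by Lemma~\ref{lem:prolongation_commutativity}, and smoothing by Assumption~\ref{ass:smoother_equivalence}. Together these show $e_\ell = \mathcal{G}_\ell e_{CG,\ell}$ with $e_{CG,\ell}$ equal to the classical level correction.

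\textbf{Step 3: reassembly and constraints (main obstacle).} Show that $\delta u_{\text{act}} = \sum_\ell \iota_\ell^\dagger \mathcal{G}_\ell e_{CG,\ell}$ equals the gather of the classical reassembled correction, and in particular that it is continuous across hanging interfaces. Active cells of different levels meet only along some $E_\ell$, so it suffices to examine those faces. On the fine side, the values of $e_\ell$ on $E_\ell$ are:
\begin{itemize}
\item set by the prolongation $\mathcal{P}_{\text{cell},\ell-1}^\ell e_{\ell-1}$, which by Lemma~\ref{lem:prolonged_field_continuity} interpolates the coarse field; and
\item left unchanged by both smoothing passes, since the smoother leaves $E_\ell$ untouched (Assumption~\ref{ass:smoother_equivalence}).
\end{itemize}
By~\eqref{eq:constraints_prolongation}, these edge values satisfy the constraints.

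The delicate point is whether the coarse-side values on the same face still agree with the prolongated ones. The level-$(\ell-1)$ correction is final once returned, because post-smoothing at level $\ell-1$ happens before prolongation. So both sides are determined by the same final $e_{\ell-1}$, and the constrained values match. I would check this ordering against the recursive structure of Algorithm~\ref{alg:mg_local_vcycle}.

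Finally, classical reassembly in~\cite{JanssenKanschat11} copies level values on active cells, so the two corrections coincide.
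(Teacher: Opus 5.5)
Your proposal is correct and follows the paper's proof. Both arguments identify the level-wise distribution with the classical \texttt{copy\_to\_mg} splitting, invoke Theorem~\ref{thm:mg_local_equivalence} for the recursion, and reassemble over the disjoint active subspaces, with the $E_\ell$ values fixed by prolongation and left untouched by the smoothers. You also make explicit two points the paper leaves implicit: the linear accumulation into $b_{\ell-1}$, and the ordering argument that the final $e_{\ell-1}$ is returned before it is prolongated, so both sides of each refinement edge see the same coarse field.
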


\begin{proof}
    The level-wise distribution~\eqref{eq:level_distribution} assigns to each level the components of the unassembled
    active residual supported on its active cells; assembling level by level, this is the classical splitting
    of the assembled active residual into the level right-hand sides (the \texttt{copy\_to\_mg} operation
    of~\cite{JanssenKanschat11}). Theorem~\ref{thm:mg_local_equivalence} identifies the recursive V-cycle with its
    classical counterpart. For the reassembly~\eqref{eq:level_gather}, note that on each level only the active
    components $\iota_\ell^\dagger e_\ell$ enter the sum, the active subspaces being disjoint
    by~\eqref{eq:active_decomposition}; the values that the prolongation deposited on cells refined further are
    discarded on level $\ell$ and recovered, refined, from the finer levels, so no contribution is counted twice. The
    continuity claim follows since each $e_\ell$ is continuous on $\mathcal{T}_\ell$ (smoother output and prolonged
    fields are continuous), the smoothers never modify $E_\ell$, and the $E_\ell$ values therefore remain the
    prolongation of the coarse correction.
\end{proof}

The cell-wise V-cycle of Algorithm~\ref{alg:mg_local_vcycle} is therefore a faithful matrix-free realisation of the
classical local multigrid method whenever the smoother satisfies Assumption~\ref{ass:smoother_equivalence}. Two remarks
delimit what the equivalence does and does not provide. First, it reduces the convergence analysis of the cell-wise
solver to a classical question: Algorithm~\ref{alg:mg_local_vcycle} converges as fast as the classical local V-cycle
built from $\mathcal{S}_\ell^{CG}$, so the estimates of~\cite{JanssenKanschat11} apply whenever $\mathcal{S}_\ell^{CG}$
satisfies the smoother hypotheses of that theory. For the damped Jacobi relaxation constructed below, this is the
standard requirement $\omega \lesssim 1/\lambda_{\max}(D_{CG,\ell}^{-1} A_{CG,\ell})$, yielding level-independent
(though not $p$-independent) convergence. Second, the equivalence framework is not tied to Jacobi: any classical local
smoother whose cell-wise realisation satisfies Assumption~\ref{ass:smoother_equivalence} (such as the overlapping
vertex-patch smoothers~\cite{pavarino1993additive,WitteArndtKanschat21,brubeck2021scalable}, whose convergence improves
with the polynomial degree) inherits its convergence theory through Theorem~\ref{thm:mg_local_equivalence}, since such
smoothers consume the residual through its assembly and return continuous corrections (they are of the admissible
form~\eqref{eq:precond_consistency}). The remainder of this section constructs the simplest such smoother, the masked
damped Jacobi relaxation.

\subsection{Construction of the Smoother}
\label{sec:smoother_construction}

It remains to exhibit a concrete cell-wise smoother $\mathcal{S}_\ell$ that satisfies the equivalence
Assumption~\ref{ass:smoother_equivalence}, thereby completing the hypotheses of Theorem~\ref{thm:mg_local_equivalence}.
We adopt a damped Jacobi relaxation built from the trivial continuity-restoring preconditioner (realized as direct
stiffness summation in \secref{sec:smoother_realization} below) and composed, on its dual and primal sides, with the
inclusion pair $(\iota_{S_\ell}, \iota_{S_\ell}^*)$ of \secref{sec:edge_masking}. We use the symbol $c_\ell \in
    \mathbb{V}_{\text{cell}, \ell}$ for the primal correction (reserving $\Delta$ for the Laplacian, which appears in the
numerical experiments).

\paragraph{Damped Jacobi correction on the interior subspace.}
Let $S \colon \mathbb{V}_{\text{cell}, \ell}^* \to \mathcal{G}_\ell \mathbb{V}_{CG, \ell} \subset
    \mathbb{V}_{\text{cell}, \ell}$ denote an operator that assembles a cell-wise dual vector and re-injects the resulting
$C^0$-continuous primal field into the redundant cell-wise storage. This is the \emph{direct stiffness summation} (DSS)
operator, whose role as the minimal continuity-restoring preconditioner and whose algorithmic realization are the
subject of \secref{sec:smoother_realization}. Together with the cell-wise diagonal $D_{\text{cell}, \ell} =
    \operatorname{diag}(\mathcal{G}_\ell\, d_\ell)$, where $d_\ell \in \mathbb{V}_{CG, \ell}$ collects the diagonal entries
of $A_{CG, \ell}$, it gives the cell-wise damped Jacobi preconditioner
\begin{equation}
    P_\ell \;:=\; D_{\text{cell}, \ell}^{-1}\, S \;\colon\; \mathbb{V}_{\text{cell}, \ell}^* \to \mathbb{V}_{\text{cell}, \ell}.
    \label{eq:jacobi_preconditioner}
\end{equation}
Because both $S$ and $D_{\text{cell}, \ell}$ act independently on each DoF entry, the natural restriction of $P_\ell$ to the interior subspace,
\begin{equation}
    P_\ell^{S_\ell} \;\colon\; \bigl(\mathbb{V}_{\text{cell}, \ell}^{S_\ell}\bigr)^* \to \mathbb{V}_{\text{cell}, \ell}^{S_\ell},
    \label{eq:jacobi_interior}
\end{equation}
is well-defined and identifies with $P_\ell$ when restricted to inputs and outputs supported on $S_\ell$.

\paragraph{Smoother definition.}
The level-wise smoother is obtained by sandwiching $P_\ell^{S_\ell}$ between the dual restriction $\iota_{S_\ell}^*$ on
the residual side and the primal inclusion $\iota_{S_\ell}$ on the correction side:
\begin{equation}
    \mathcal{S}_\ell(A_\ell, \tilde{b}_\ell, u_\ell) \;:=\; u_\ell \;+\; \omega\, \iota_{S_\ell}\, P_\ell^{S_\ell}\, \iota_{S_\ell}^* \bigl(\tilde{b}_\ell - A_\ell u_\ell\bigr),
    \label{eq:cellwise_smoother}
\end{equation}
with relaxation parameter $\omega > 0$. The dual restriction $\iota_{S_\ell}^*$ discards the outward fluxes on $E_\ell$ that are reserved for the coarse-grid correction; the interior preconditioner $P_\ell^{S_\ell}$ produces a primal correction supported on $S_\ell$; and the inclusion $\iota_{S_\ell}$ extends this correction by zero on $E_\ell$, enforcing the homogeneous Dirichlet boundary condition that defines local smoothing. Because $S$ produces a $C^0$-continuous primal vector and $D_{\text{cell}, \ell}$ is diagonal in the cell-wise basis, $P_\ell^{S_\ell} \iota_{S_\ell}^* \tilde{r}_\ell$ is already $C^0$-continuous in $\mathbb{V}_{\text{cell}, \ell}^{S_\ell}$ and the inclusion $\iota_{S_\ell}$ yields a vector in $\mathcal{G}_\ell \mathbb{V}_{CG, \ell}$ directly.

\paragraph{Smoother equivalence.}
The construction~\eqref{eq:cellwise_smoother} mirrors, in cell-wise storage, the classical local damped Jacobi smoother
of~\cite{JanssenKanschat11}. Concretely:

\begin{lemma}[Verification of Assumption~\ref{ass:smoother_equivalence}]
    \label{lem:smoother_equivalence}
    Let
    $$\mathcal{S}_\ell^{CG} \colon (A_{CG, \ell}, b_{CG, \ell}, u_{CG, \ell}) \mapsto u_{CG, \ell} + \omega\, \iota^{CG}_{S_\ell} (D^{S_\ell}_{CG, \ell})^{-1} (\iota^{CG}_{S_\ell})^*\bigl(b_{CG, \ell} - A_{CG, \ell} u_{CG, \ell}\bigr)$$
    denote the classical local damped Jacobi smoother on $\mathbb{V}_{CG, \ell}$, where $\iota^{CG}_{S_\ell}$ is the inclusion of the interior-DoF subspace of $\mathbb{V}_{CG, \ell}$ (so that $E_\ell$ entries are frozen). The smoother $\mathcal{S}_\ell$ defined by~\eqref{eq:cellwise_smoother} satisfies the equivalence~\eqref{eq:smoother_equivalence}.
\end{lemma}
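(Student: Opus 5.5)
We verify~\eqref{eq:smoother_equivalence} by expanding both sides for a continuous iterate $u_\ell = \mathcal{G}_\ell u_{CG}$ and an arbitrary $\tilde{b}_\ell$, and matching them term by term. The proof splits into three steps: the residual, the masked Jacobi map applied to it, and the final update.

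\emph{Step 1: the residual.} The cell-wise residual is $\tilde{r}_\ell = \tilde{b}_\ell - A_\ell \mathcal{G}_\ell u_{CG}$. Since $A_{CG,\ell} = \mathcal{G}_\ell^T A_\ell \mathcal{G}_\ell$, its assembly is
\begin{equation*}
\mathcal{G}_\ell^T \tilde{r}_\ell = \mathcal{G}_\ell^T\tilde{b}_\ell - A_{CG,\ell} u_{CG} =: r_{CG},
\end{equation*}
which is exactly the classical residual fed to $\mathcal{S}_\ell^{CG}$ with right-hand side $\mathcal{G}_\ell^T \tilde{b}_\ell$.

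\emph{Step 2: the masked Jacobi map (the main step).} We show
\begin{equation*}
\iota_{S_\ell} P_\ell^{S_\ell} \iota_{S_\ell}^* \tilde{r}_\ell = \mathcal{G}_\ell\, \iota^{CG}_{S_\ell} (D^{S_\ell}_{CG,\ell})^{-1} (\iota^{CG}_{S_\ell})^* \mathcal{G}_\ell^T \tilde{r}_\ell .
\end{equation*}
We use the defining property of $S$: for every $\tilde{r}$, $S\tilde{r} = \mathcal{G}_\ell M \mathcal{G}_\ell^T \tilde{r}$, where $M$ identifies assembled dual coefficients with primal coefficients (for Euclidean pairings, $M$ is the identity on coefficient vectors). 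Hence $P_\ell = D_{\text{cell},\ell}^{-1}\mathcal{G}_\ell \mathcal{G}_\ell^T$. Because $D_{\text{cell},\ell} = \operatorname{diag}(\mathcal{G}_\ell d_\ell)$ replicates the global diagonal, the diagonal scaling commutes with the gather: $D_{\text{cell},\ell}^{-1}\mathcal{G}_\ell = \mathcal{G}_\ell D_{CG,\ell}^{-1}$. This gives the factorization $P_\ell = \mathcal{G}_\ell D_{CG,\ell}^{-1}\mathcal{G}_\ell^T$, which is the admissible form~\eqref{eq:precond_consistency}.

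It remains to commute the masks past $\mathcal{G}_\ell$ and $\mathcal{G}_\ell^T$. Membership of a DoF in $E_\ell$ is a property of the global DoF, so every cell-local copy of an edge DoF lies on $E_\ell$. The cell-wise masks are therefore the gathered versions of the global masks:
\begin{equation*}
\iota_{S_\ell}\iota_{S_\ell}^{\dagger}\,\mathcal{G}_\ell = \mathcal{G}_\ell\, \iota^{CG}_{S_\ell}(\iota^{CG}_{S_\ell})^{\dagger}, \qquad \mathcal{G}_\ell^T \iota_{S_\ell}^{*\dagger}\iota_{S_\ell}^* = \iota^{CG}_{S_\ell}{}^{*\dagger}(\iota^{CG}_{S_\ell})^*\mathcal{G}_\ell^T ,
\end{equation*}
where the second identity is the transpose of the first. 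Together with the entrywise action of $P_\ell^{S_\ell}$ stated after~\eqref{eq:jacobi_interior}, this reduces the left-hand side to $\mathcal{G}_\ell$ applied to the masked global Jacobi step on $r_{CG}$. The diagonal block satisfies $D^{S_\ell}_{CG,\ell} = (\iota^{CG}_{S_\ell})^* D_{CG,\ell}\,\iota^{CG}_{S_\ell}$, so inverting on $S_\ell$ is the same as restricting $D_{CG,\ell}^{-1}$.

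\emph{Step 3: conclusion.} Adding $u_\ell = \mathcal{G}_\ell u_{CG}$ and using linearity of $\mathcal{G}_\ell$ yields~\eqref{eq:smoother_equivalence}. The continuity of the output, i.e.\ that it lies in the range of $\mathcal{G}_\ell$, follows from this identity and need not be checked separately.

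The main obstacle is the mask commutation in Step 2. It requires that the interior/edge classification be consistent across all cell copies of a DoF, and that Dirichlet DoFs, which belong to neither set, are handled identically on both sides. I would state this consistency explicitly as a consequence of the DoF-based definition of $S_\ell$ and $E_\ell$ in \secref{sec:edge_masking}, so that the cell-wise masking is entrywise on replicated coordinates and commutes with both gather and assembly.
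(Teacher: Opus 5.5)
Your proposal is correct and takes essentially the same route as the paper. In both arguments you assemble the residual, factor $P_\ell = \mathcal{G}_\ell D_{CG,\ell}^{-1}\mathcal{G}_\ell^T$ using the Riesz form of $S$ and the diagonal intertwining, and commute the DoF-based $E_\ell$ masks through gather and assembly; your explicit projector identities just state the mask-commutation step more precisely than the paper's verbal version.
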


\begin{proof}
    Set $u_\ell = \mathcal{G}_\ell u_{CG, \ell}$ and let $\tilde{b}_\ell \in \mathbb{V}_{\text{cell}, \ell}^*$ be arbitrary, with assembled counterpart $b_{CG, \ell} = \mathcal{G}_\ell^T \tilde{b}_\ell$. The dual residual gathers as $\mathcal{G}_\ell^T(\tilde{b}_\ell - A_\ell u_\ell) = b_{CG, \ell} - A_{CG, \ell} u_{CG, \ell}$. The classification of a DoF as interior or edge is a property of the global DoF, shared by all of its cell-local copies, so the dual mask commutes with assembly: zeroing the $E_\ell$ entries of every cell-local copy and then assembling gives the same result as assembling first and zeroing the $E_\ell$ entries of the global vector, i.e.\ the inclusion pair $(\iota_{S_\ell}, \iota_{S_\ell}^*)$ intertwines with the gather--scatter pair $(\mathcal{G}_\ell, \mathcal{G}_\ell^T)$ through the corresponding $CG$ inclusions. Moreover, applying DSS to an $E_\ell$-masked dual vector produces a continuous primal vector that vanishes on $E_\ell$, so the subsequent primal inclusion $\iota_{S_\ell}$ acts as the identity. Combining these observations with the trivial-preconditioner identity $S = \mathcal{G}_\ell \mathcal{I}_{\text{Riesz}} \mathcal{G}_\ell^T$ and the diagonal intertwining $\mathcal{G}_\ell D_{CG, \ell}^{-1} = D_{\text{cell}, \ell}^{-1} \mathcal{G}_\ell$ of~\eqref{eq:jacobi_recap} yields
    $$ \iota_{S_\ell}\, P_\ell^{S_\ell}\, \iota_{S_\ell}^* (\tilde{b}_\ell - A_\ell u_\ell) \;=\; \mathcal{G}_\ell\, \iota^{CG}_{S_\ell} (D^{S_\ell}_{CG, \ell})^{-1} (\iota^{CG}_{S_\ell})^* (b_{CG, \ell} - A_{CG, \ell} u_{CG, \ell}). $$
    Adding to $u_\ell = \mathcal{G}_\ell u_{CG, \ell}$ gives the right-hand side of~\eqref{eq:smoother_equivalence}.
\end{proof}

Together with Theorem~\ref{thm:mg_local_equivalence}, Lemma~\ref{lem:smoother_equivalence} closes the convergence
argument: the local cell-wise V-cycle of Algorithm~\ref{alg:mg_local_vcycle} equipped with the
smoother~\eqref{eq:cellwise_smoother} inherits the convergence estimates of~\cite{JanssenKanschat11}.

\subsection{Convergence Measure}
\label{sec:energy_measure}

Throughout this paper, convergence is monitored in the \emph{energy} pairing
\begin{equation}
    \eta(u) \;=\; \langle \tilde{r}, u \rangle \;=\; \langle \tilde{b} - A u,\, u \rangle,
    \label{eq:energy_measure}
\end{equation}
which pairs the raw, unassembled dual residual with the continuous primal iterate. By the pairing
identity~\eqref{eq:pairing_global} this equals the assembled pairing $\langle b_{CG} - A_{CG} u_{CG}, u_{CG} \rangle$,
so it is computable entirely from local data, without any DSS pass. This is the natural convergence measure of the
framework: the Euclidean norm of the \emph{raw} residual is not meaningful (its duplicated entries hold arbitrary
partial sums), and the Euclidean norm of the assembled residual requires an additional DSS application solely for
monitoring purposes. The energy pairing requires neither, and it is also the quantity the framework's solvers natively
control: the equivalence theorem of the companion paper~\cite{wichrowski2026DSS} shows that the communication-free
conjugate gradient iteration identically minimizes the exact global energy functional $\mathcal{J}(u) = \tfrac{1}{2}
    \langle A u, u \rangle - \langle \tilde{b}, u \rangle$, whose gradient pairing with the iterate
is~\eqref{eq:energy_measure}. Writing $e = u^* - u$ for the discrete error, $\eta(u) = a(e, u)$ vanishes at
the discrete solution, and in our experiments its per-cycle reduction tracks that of the energy norm $\|e\|_A$.

\section{Algorithmic Realization of the Smoother}
\label{sec:smoother_realization}

The multigrid theory of the previous section is formulated against an abstract requirement: the smoother must consume
the residual only through its assembly and return a continuity-respecting correction
(cf.~\eqref{eq:precond_consistency}). This section supplies the concrete realization. We first identify the simplest
operator with this property (direct stiffness summation) and assemble the smoothing sweep around it; we then summarize
the structured DSS kernel of the companion paper~\cite{wichrowski2026DSS} and show that the refinement-edge masking
makes it applicable, unchanged, on adaptively refined meshes.

\subsection{Direct Stiffness Summation: the Trivial Continuity-Restoring Preconditioner}
\label{sec:trivial_preconditioner}

% Paragraph: S as the simplest operator of the consistent form.
The simplest operator of the consistent form~\eqref{eq:precond_consistency} is obtained by taking $P_{CG}$ to be the
identity (the Euclidean Riesz isomorphism $\mathcal{I}_{\text{Riesz}} \colon \mathbb{V}_{CG}^* \to \mathbb{V}_{CG}$).
The resulting operator
\begin{equation} \label{eq:dss_operator}
    S = \mathcal{G} \mathcal{I}_{\text{Riesz}} \mathcal{G}^T \colon \mathbb{V}_{\text{cell}}^* \to \mathbb{V}_{\text{cell}}
\end{equation}
is the classical \emph{direct stiffness summation}: it sums the duplicated partial integrals of a dual vector across
all sharing cells and re-injects the accumulated values into every copy, producing a $C^0$-continuous primal vector.
By construction $\text{Im}(S) = \mathcal{G}(\mathbb{V}_{CG})$. $S$ is one realization, not a
requirement: any operator satisfying~\eqref{eq:precond_consistency}, such as an overlapping patch
smoother~\cite{pavarino1993additive,WitteArndtKanschat21,brubeck2021scalable}, could take its place in the smoother
without affecting the equivalence theory of \secref{sec:multigrid}.

% Paragraph: Jacobi scaling commutes with the gather.
Diagonal scaling composes with $S$ at no extra communication cost. Let $d \in \mathbb{V}_{CG}$ collect the diagonal
entries of $A_{CG}$ and let $D_{\text{cell}} = \operatorname{diag}(\mathcal{G} d)$ be the cell-wise diagonal holding
the gathered copies. Since $\mathcal{G}$ merely replicates entries, the scaling intertwines with the gather,
$\mathcal{G} D_{CG}^{-1} = D_{\text{cell}}^{-1} \mathcal{G}$, so that
\begin{equation} \label{eq:jacobi_recap}
    \mathcal{G} D_{CG}^{-1} \mathcal{I}_{\text{Riesz}} \mathcal{G}^T = D_{\text{cell}}^{-1} S,
\end{equation}
i.e.\ the damped Jacobi preconditioner $P_\ell = D_{\text{cell}, \ell}^{-1} S$ of~\eqref{eq:jacobi_preconditioner} is
evaluated as one DSS sweep followed by a pointwise scaling of the local vectors.

\subsection{The Smoothing Sweep}
\label{sec:smoother_sweep}
Algorithm~\ref{alg:smoother} summarizes the practical realization of the smoother~\eqref{eq:cellwise_smoother},
faithfully following the dual-then-primal flow of the formula. Each smoothing sweep evaluates the unassembled
volumetric flux $v_\ell := A_\ell u_\ell \in \mathbb{V}_{\text{cell}, \ell}^*$, forms the unassembled dual residual
$\tilde{r}_\ell = \tilde{b}_\ell - v_\ell$, applies the dual restriction $\iota_{S_\ell}^*$ to zero its $E_\ell$
entries, assembles via DSS to obtain $\tilde{z}_\ell := S \tilde{r}_\ell \in \mathcal{G}_\ell \mathbb{V}_{CG, \ell}$,
and applies the pointwise diagonal scaling $c_\ell = \omega D_{\text{cell}, \ell}^{-1} \tilde{z}_\ell$. Because the
dual residual was already zero on $E_\ell$ before DSS, the resulting primal correction is automatically zero on
$E_\ell$ and $C^0$-continuous, so the inclusion $\iota_{S_\ell}$ acts as the identity and is absorbed silently into the
storage layout.

\begin{algorithm}[H]
    \caption{Level smoother $\mathcal{S}_\ell$ realizing~\eqref{eq:cellwise_smoother}.}
    \label{alg:smoother}
    \begin{algorithmic}[1]
        \Require Level $\ell$, iterate $u_\ell \in \mathbb{V}_{\text{cell}, \ell}$, unassembled right-hand side $\tilde{b}_\ell \in \mathbb{V}_{\text{cell}, \ell}^*$, relaxation $\omega$, inverse cell-wise diagonal $D_{\text{cell}, \ell}^{-1}$.
        \Ensure Updated iterate $u_\ell$.
        \State $v_\ell \gets A_\ell\, u_\ell$ \Comment{Cell-wise volumetric operator}
        \State $\tilde{r}_\ell \gets \tilde{b}_\ell - v_\ell$ \Comment{Unassembled dual residual}
        \State $\tilde{r}_\ell \gets \iota_{S_\ell}^*\, \tilde{r}_\ell$ \Comment{Dual mask: zero $E_\ell$ entries}
        \State $\bar{z}_\ell \gets S\, \tilde{r}_\ell$ \Comment{Inter-cell communication}
        \State $c_\ell \gets \omega\, D_{\text{cell}, \ell}^{-1}\, \bar{z}_\ell$ \Comment{Diagonal Jacobi scaling}
        \State $u_\ell \gets u_\ell + c_\ell$
        \State \Return $u_\ell$
    \end{algorithmic}
\end{algorithm}

A direct inspection of Algorithm~\ref{alg:smoother} together with
Algorithms~\ref{alg:mg_local_iteration}--\ref{alg:mg_local_vcycle} reveals the structural core of the framework: every
operation in one V-cycle is cell-local, except for the single DSS application $S$ on line~4 of the smoother. The
level-wise distribution~\eqref{eq:level_distribution} and reassembly~\eqref{eq:level_gather} read and write each active
cell independently of all others; the unassembled residual evaluation $\tilde{b}_\ell - A_\ell u_\ell$ is a sum of
cell-local terms; the cell-wise restriction $\mathcal{R}_\ell^{\ell-1}$ and prolongation $\mathcal{P}_{\ell-1}^\ell$
are tensor-product Kronecker contractions between a parent and its own children
(Algorithms~\ref{alg:prolongation}--\ref{alg:restriction}); the dual mask $\iota_{S_\ell}^*$ is diagonal in the
cell-wise basis; and the diagonal scaling $D_{\text{cell}, \ell}^{-1}$ is pointwise. The only operator that reads or
writes data shared between distinct cells is $S$, which sums redundantly stored entries across element interfaces, and
it is confined to the smoother. The classical multigrid pipeline is replaced by a pipeline whose inter-cell
communication is deferred to the relaxation step. The outer Krylov accelerator and the inter-grid transfers can then
operate on disjoint memory tiles, and the engineering effort needed to optimize a high-throughput GPU implementation
concentrates onto a single primitive: the DSS step inside the smoother. The DSS algorithm summarized next is therefore
the only piece of inter-cell topology that the solver ever sees.

\subsection{Realization of DSS on Block-Structured Grids}
\label{sec:dss}

% Paragraph: design goal: no indirect addressing, no atomics; layered construction.
The companion paper~\cite{wichrowski2026DSS} develops GPU algorithms that evaluate $S$ without indirect addressing or
atomic operations, exploiting the block-structured cell-wise layout instead of traversing global index lists. Rather
than routing data through the indirect indices of an assembled vector, partial sums are exchanged directly across cell
interfaces via symmetric point-to-point transmissions. We recall the construction here in the detail needed to analyse
its interaction with adaptive refinement; the multigrid theory above requires only the algebraic
form~\eqref{eq:dss_operator}, so any gather-scatter realization~\cite{gslib,libceed2021} could be substituted, but the
interplay between the \emph{structured} kernel below and the refinement-edge mask is what makes the adaptive V-cycle
fast in practice.

% Paragraph: the dimensionally-split structured cascade.
\paragraph{The structured cascade.}
Within a structured macro-block of cells, DSS is realized as a dimensionally-split cascade of $d$ axis-wise passes
(Algorithm~\ref{alg:struct_dss}). A single pass along axis $\alpha$ visits every interior interface normal to $\alpha$,
sums the two coincident copies of each interface DoF, and writes the sum back to both sides, so that after the pass
both copies hold the assembled value of that face pair. Because the block layout is uniform, the exchange partners sit
at statically known offsets: the kernels read and write coalesced memory with hard-coded face offsets, and since every
interface pair is handled by one thread, no atomic operations are needed. The key property is that vertex and edge DoFs
are assembled as a byproduct: in 2D, a vertex shared by four cells receives the sums of its vertical pairs during the
$y$-pass, and the subsequent $x$-pass exchanges these pairwise-summed values across the vertical interfaces, so all
four copies hold the total. The same argument applies recursively in 3D for the four-way edge and eight-way vertex
sums. Sequentiality of the passes is essential, but only $d$ kernel launches are required in total. The kernel sweeps a
block of memory without consulting mesh connectivity.

\begin{algorithm}[H]
    \caption{Dimensionally-split structured summation~\cite{wichrowski2026DSS}.}
    \label{alg:struct_dss}
    \begin{algorithmic}[1]
        \Require Cell-wise dual vector $u \in \mathbb{V}_{\text{cell}}^*$ on a structured block.
        \Ensure Vector $\bar{v}$ with assembled values on all interfaces interior to the block.
        \State $\bar{v} \gets u$
        \For{axis $\alpha = 1, \dots, d$} \Comment{$d$ sequential kernel launches}
        \For{\textbf{each} interface pair $(K_L, K_R)$ normal to $\alpha$ \textbf{in parallel}}
        \State $s \gets \bar{v}|_{K_L, \text{face}_\alpha = p} + \bar{v}|_{K_R, \text{face}_\alpha = 0}$ \Comment{Statically addressed, coalesced}
        \State $\bar{v}|_{K_L, \text{face}_\alpha = p} \gets s, \quad \bar{v}|_{K_R, \text{face}_\alpha = 0} \gets s$
        \EndFor
        \EndFor
        \State \Return $\bar{v}$
    \end{algorithmic}
\end{algorithm}

% Paragraph: unstructured inter-block kernels, recalled for completeness.
\paragraph{Unstructured inter-block interfaces.}
The cascade relies on every interior vertex having the regular $2^d$-cell neighborhood of a tensor-product grid. At the
interfaces \emph{between} macro-blocks of an unstructured base mesh this regularity is lost, and no sequence of
axis-aligned pairwise exchanges visits all of its copies. For completeness we recall that these interfaces are resolved
by a second phase of dedicated kernels (Algorithm~\ref{alg:unstruct_dss}): the affected DoFs are partitioned by
codimension into three mutually disjoint sets, each processed by its own kernel: macro-faces (their bounding lines and
corners masked out), macro-lines (endpoints masked out, in 3D only), and macro-vertices. A macro-face is shared by two
blocks, so its kernel is a fixed pairwise exchange; macro-lines and macro-vertices carry a runtime valence and instead
gather their sharing set into registers, accumulate, and scatter the completed sum back to every copy. The one subtlety
is that the structured pass has already summed the several copies a line or vertex has \emph{within} each block to a
common per-block value, so the inter-block gather must add only one representative copy per sharing block, because
summing all copies would over-count by the within-block multiplicity. Face and line orientations are reconciled on the
fly by integer arithmetic, with no connectivity lookup beyond the macro-block adjacency~\cite{wichrowski2026DSS}. This
phase runs after the structured sweeps, consuming the partial block-boundary sums they produce; it is the only part of
the DSS pipeline that consults connectivity tables, and the affected DoFs form a lower-dimensional skeleton of the
mesh, so its cost is minor.

\begin{algorithm}[H]
    \caption{Unstructured inter-block summation (summary; see~\cite{wichrowski2026DSS} for details).}
    \label{alg:unstruct_dss}
    \begin{algorithmic}[1]
        \Require Vector $\bar{v}$ from Algorithm~\ref{alg:struct_dss}; disjoint sets of macro-faces $\mathcal{F}$ (pairwise), macro-lines $\mathcal{L}$, macro-vertices $\mathcal{V}$; one representative cell per sharing block flagged on each line/vertex entity.
        \Ensure Vector $\bar{v}$ with globally assembled macro-block interfaces.
        \For{\textbf{each} set $\mathcal{X} \in \{\mathcal{F}, \mathcal{L}, \mathcal{V}\}$} \Comment{Three disjoint kernel launches}
        \For{\textbf{each} entity $X \in \mathcal{X}$ \textbf{in parallel}}
        \State $s \gets \sum_{K \in \mathrm{rep}(X)} \textsc{Canonicalize}\bigl(\bar{v}|_{K, X}\bigr)$ \Comment{One representative per sharing block (both blocks, for faces)}
        \State $\bar{v}|_{K, X} \gets \textsc{InverseCanonicalize}(s)$ \textbf{ for each } $K \ni X$ \Comment{Scatter to all copies}
        \EndFor
        \EndFor
        \State \Return $\bar{v}$
    \end{algorithmic}
\end{algorithm}

% Paragraph: where the structured cascade breaks on adaptive hierarchies.
\subsubsection{Refinement edges.}
On the level meshes $\mathcal{T}_\ell$ of an adaptive hierarchy, a third situation arises that neither phase is
designed for. The structured cascade enumerates its exchange partners by memory layout, not by topology: it assumes
that across every interior interface of the block there \emph{is} a neighboring cell holding the matching partial sum.
At a refinement edge this assumption fails. Where the locally refined region of level $\ell$ ends (e.g., at re-entrant,
``L-shaped'' corners of the refined region within a macro-block, Figure~\ref{fig:dss_lshaped}), the cascade exchanges
face data with cells that have no active counterpart at the current level: the values it reads there are stale or zero,
and the sums it produces on these hanging interfaces are incorrect. Making the kernel aware of such corners would
reintroduce the per-interface topology queries that the structured design eliminates.

\begin{figure}[htbp]
    \centering
    % Failure of the structured DSS cascade at a re-entrant (L-shaped) corner of a
% refined region on a level mesh (flat top view). The geometry is the level mesh
% T_1 of the local-multigrid figure: a rectangular domain whose level-ell active
% cells form an L, leaving a re-entrant coarse pocket with no active level-ell cell.
% The structured cascade sweeps along rows and columns and exchanges face data
% across every interior interface -- including those on the refinement edge E_ell,
% where the neighbor is inactive. Those exchanges (orange, crossed) produce
% incorrect sums on the hanging interfaces.
\def\egap{4pt}   % gap at coarse nodes / inward shift of the corner dot
\def\agap{8pt}   % trim at each end of the cascade-exchange arrows
\def\xsz{0.07}   % half-size of the red "invalid sum" crosses
\begin{tikzpicture}[
        scale=1.05,
        act/.style={fill=gray!25, draw=black, thin},      % active fine cell
        foot/.style={draw=black!55, line width=0.8pt, dashed}, % domain outline
        redge/.style={green!50!black, line width=1.9pt,
                        shorten >=\egap, shorten <=\egap},        % refinement edge E_ell
        bad/.style={{Latex[length=2.2mm]}-{Latex[length=2.2mm]},
        orange!85!black, line width=1pt,
        shorten >=\agap, shorten <=\agap},         % offending exchange
        good/.style={{Latex[length=2mm]}-{Latex[length=2mm]},
        black, line width=0.8pt,
        shorten >=\agap, shorten <=\agap},         % valid exchange
        miss/.style={{Latex[length=2mm]}-{Latex[length=2mm]},
        red, line width=0.8pt, dashed,
        shorten >=\agap, shorten <=\agap},         % non-existent exchange
        lab/.style={font=\scriptsize}, ]

        % --- active (shaded) fine cells of the L-shaped level-ell region ---
        % bottom block [0,4]x[0,2] plus the top-left cell-strip [0,2]x[2,4]
        \foreach \c/\r in {0/0,1/0,2/0,3/0, 0/1,1/1,2/1,3/1, 0/2,1/2, 0/3,1/3}
        \filldraw[act] (\c,\r) rectangle (\c+1,\r+1);

        % --- domain (macro-block) outline, rectangular and dashed (as level T_1) ---
        \draw[foot] (0,0) rectangle (6,4);
        \node[lab, anchor=south west] at (0,4.08) {$\mathcal{T}_\ell$};

        % --- refinement edge E_ell, broken into cell-wise pieces (gap at nodes) ---
        \foreach \y in {0,1} { \draw[redge] (4,\y) -- (4,\y+1); }   % along u=4
        \foreach \x in {2,3} { \draw[redge] (\x,2) -- (\x+1,2); }   % along v=2
        \foreach \y in {2,3} { \draw[redge] (2,\y) -- (2,\y+1); }   % along u=2
        \node[lab, green!50!black, anchor=west] at (2.1,3.55) {$E_\ell$};

        % --- the three DOF markers sitting in the active cells around the
        % re-entrant corner (2,2): a dot, a square and a triangle, each just
        % inside its own cell. {cx}{cy}{off}{s} = corner, inward offset, size. ---
        \newcommand{\hangmarkers}[4]{%
                % dot (red)      -- lower-left cell
                \fill[red]  ($(#1,#2)+(-#3,-#3)$) circle (#4);
                % square (blue)  -- lower-right cell
                \fill[blue] ($(#1,#2)+(#3-#4,-#3-#4)$) rectangle ($(#1,#2)+(#3+#4,-#3+#4)$);
                % triangle (teal)-- upper-left cell
                \fill[teal] ($(#1,#2)+(-#3,#3+1.15*#4)$)
                -- ($(#1,#2)+(-#3-#4,#3-#4)$)
                -- ($(#1,#2)+(-#3+#4,#3-#4)$) -- cycle;
        }
        \hangmarkers{2}{2}{\egap}{2.4pt}
        % small circle around the cluster -> magnified in the close-up on the right
        \draw[black, thick,dashed] (2,2) circle (0.32);

        % small red cross marking an incorrect hanging-interface sum
        \newcommand{\wrong}[2]{%
                \draw[red, line width=1pt] (#1-\xsz,#2-\xsz) -- (#1+\xsz,#2+\xsz);
                \draw[red, line width=1pt] (#1-\xsz,#2+\xsz) -- (#1+\xsz,#2-\xsz);
        }
        % --- offending structured-cascade exchanges crossing E_ell into the pocket ---
        % x-sweep exchanges crossing u=2
        \draw[bad] (1.5,2.5) -- (2.5,2.5);   \wrong{2}{2.5}
        \draw[bad] (1.5,3.5) -- (2.5,3.5);   \wrong{2}{3.5}
        % y-sweep exchanges crossing v=2
        \draw[bad] (2.5,1.5) -- (2.5,2.5);   \wrong{2.5}{2}
        \draw[bad] (3.5,1.5) -- (3.5,2.5);   \wrong{3.5}{2}
        % x-sweep exchanges crossing u=4 (two bottom-right cells)
        \draw[bad] (3.5,0.5) -- (4.5,0.5);   \wrong{4}{0.5}
        \draw[bad] (3.5,1.5) -- (4.5,1.5);   \wrong{4}{1.5}

        % --- valid exchanges across interior faces between two active cells ---
        % (black, every other face) -- x-faces centred on a vertical face at u=#1
        \newcommand{\xex}[2]{\draw[good] (#1-0.5,#2) -- (#1+0.5,#2);}
        % y-faces centred on a horizontal face at v=#2
        \newcommand{\yex}[2]{\draw[good] (#1,#2-0.5) -- (#1,#2+0.5);}
        % vertical faces (x-exchanges) between horizontally adjacent active cells
        \xex{1}{0.5} \xex{2}{0.5} \xex{3}{0.5}
        \xex{1}{1.5} \xex{2}{1.5} \xex{3}{1.5}
        \xex{1}{2.5} \xex{1}{3.5}
        % horizontal faces (y-exchanges) between vertically adjacent active cells
        \yex{0.5}{1} \yex{1.5}{1} \yex{2.5}{1} \yex{3.5}{1}
        \yex{0.5}{2} \yex{1.5}{2}
        \yex{0.5}{3} \yex{1.5}{3}

        % =====================================================================
        % CLOSE-UP of the circled re-entrant corner, illustrating the failing
        % exchange: three active cells meet at the corner, the fourth quadrant
        % is the inactive pocket. The cascade still tries to sum a contribution
        % across the two pocket-facing faces (E_ell) -> incorrect.
        % =====================================================================
        \begin{scope}[shift={(8.6,2.0)}, scale=2.05]
                % clip the magnified view to a circular lens
                \begin{scope}
                        \clip (0,0) circle (1.0);
                        % three active quadrant cells (corner at local origin)
                        \filldraw[act] (-1,-1) rectangle (0, 0);   % lower-left  (dot)
                        \filldraw[act] ( 0,-1) rectangle (1, 0);   % lower-right (square)
                        \filldraw[act] (-1, 0) rectangle (0, 1);   % upper-left  (triangle)
                        % upper-right quadrant: inactive pocket (blank, dashed outline)
                        \draw[foot] (0,0) rectangle (1,1);
                        \node[lab, align=center, gray!45!black] at (0.5,0.5)
                        {inactive\\pocket};
                        % refinement edge along the two pocket-facing faces
                        \draw[redge] (0,0) -- (1,0);
                        \draw[redge] (0,0) -- (0,1);
                        % valid exchanges along the principal axes (black, solid)
                        \draw[good] (-0.30,-0.30) -- ( 0.30,-0.30);   % circle <-> square (x)
                        \draw[good] (-0.30,-0.30) -- (-0.30, 0.30);   % circle <-> triangle (y)
                        % non-existent exchanges reaching into the empty pocket (red dashed)
                        \draw[miss] (-0.30, 0.30) -- ( 0.30, 0.30);   % triangle <-> (missing)
                        \draw[miss] ( 0.30,-0.30) -- ( 0.30, 0.30);   % square   <-> (missing)
                        % cross out the invalid exchanges at their midpoints
                        \wrong{0}{0.30}
                        \wrong{0.30}{0}
                        % enlarged DOF markers (drawn last, on top of the arrows)
                        \hangmarkers{0}{0}{0.30}{0.06}
                \end{scope}
                % circular lens border
                \draw[black, thick, dashed] (0,0) circle (1.0);
        \end{scope}

        % curved "zoom" arrow from the small circle up to the enlarged lens
        \draw[-{Latex[length=3mm]}, thick]
        (2.226,2.226) to[out=65, in=160] (6.6,3.);

\end{tikzpicture}
    \caption{Interaction of the structured DSS cascade with an adaptively refined region, shown on the level mesh
        $\mathcal{T}_\ell$ whose active cells (gray) form a re-entrant L, leaving an inactive pocket with no level-$\ell$
        cell behind the refinement edge $E_\ell$ (green). The dimensionally-split passes exchange face data along each
        principal axis: between two active cells the exchange is well defined (black arrows), but along faces lying on
        $E_\ell$ the cascade still reaches toward the empty pocket, producing incorrect sums (crossed, orange). The
        close-up of the re-entrant corner makes this explicit: the three active cells meeting there carry a degree of
        freedom (dot, square, triangle), and the valid axis exchanges (black) couple them, whereas the two exchanges
        that would cross $E_\ell$ into the pocket have no partner (dashed, crossed). The masked
        smoother~\eqref{eq:cellwise_smoother} renders these values irrelevant: the residual is zeroed on $E_\ell$ before
        DSS, and the correction there is discarded by construction.}
    \label{fig:dss_lshaped}
\end{figure}

% Paragraph: ... and why we don't need to fix it.
We do not need the structured cascade to be correct on these interfaces, because the multigrid smoother does not use
the values it produces there. All offending DoFs lie, by definition, on the refinement edge $E_\ell$, and the masked
smoother~\eqref{eq:cellwise_smoother} treats $E_\ell$ specially on \emph{both} sides of the DSS step: the dual
restriction $\iota_{S_\ell}^*$ on line~3 of Algorithm~\ref{alg:smoother} zeros the residual on $E_\ell$ \emph{before}
DSS ever touches it, and the primal inclusion $\iota_{S_\ell}$ discards the correction on $E_\ell$ afterwards: those
entries are zero by construction, regardless of the cascade exchanges across the hanging interfaces. The incorrect sums
are thus produced into entries that are masked out, and never propagate into the iterate. The edge constraints required
by the local multigrid method (\secref{sec:edge_masking}) license the kernel: the solver can exploit the structured DSS
on complex adaptive patterns without requiring a topology-aware variant of Algorithm~\ref{alg:struct_dss}.

Consequently, the solver presented in this paper handles no hanging-node constraints: at no point does any component
evaluate, store, or enforce a constraint across a non-conforming interface. Continuity across refinement edges is
carried entirely by the inter-grid transfers and the residual mask of the local multigrid method.

\subsection{Outer Krylov Acceleration}
\label{sec:krylov_acceleration}

% Paragraph: the V-cycle as a stationary iteration and as a consistent CG preconditioner.
In the numerical experiments the adaptive V-cycle is used in two ways: as a stationary iteration in its own right, and
as a preconditioner for the conjugate gradient method, executed entirely on unassembled cell-wise data. All computation
is carried out in \texttt{fp64}, so the V-cycle is a \emph{fixed} linear operator that does not change between
iterations, and standard preconditioned conjugate gradients (CG) applies directly. The companion
paper~\cite{wichrowski2026DSS} proves that conjugate gradients written in primal-dual form is identical, iterate by
iterate, to its standard counterpart applied to the assembled system, provided every preconditioner application is
consistent in the sense of~\eqref{eq:precond_consistency}. The V-cycle constructed in this paper satisfies this
condition by design: it consumes the residual only through the DSS step inside the smoother and returns a continuous
correction (Theorem~\ref{thm:mg_local_equivalence}). Consequently the complete solver (CG accelerating the adaptive
cell-wise V-cycle) is equivalent to its classically assembled counterpart, while performing inter-cell communication
exclusively inside the smoothing sweeps.

\section{Implementation Notes}
\label{sec:implementation}

The V-cycle of \secref{sec:multigrid} is, on a GPU, entirely memory bound: at moderate polynomial degrees the
sum-factorized operator application streams each cell-wise vector once through memory and performs only a few tens of
arithmetic operations per loaded value, so a full operator application $A_\ell u_\ell$ costs approximately the same as
a plain vector addition. The relevant cost model for one V-cycle is therefore not the operation count but the number of
\emph{full-vector streaming passes} (plus the fixed overhead of each kernel launch). This section records the measures
by which our reference implementation reduces a V-cycle to the minimal number of such passes. None of the measures
alters the computed iterates: in exact arithmetic the optimized cycle is identical to a straightforward transcription
of Algorithms~\ref{alg:mg_local_iteration}--\ref{alg:mg_local_vcycle}, by the algebraic identities given below.

\paragraph{Detection of the refinement edge.}
The masked smoother of \secref{sec:smoother_realization} needs, on each level $\ell$, the set $E_\ell$ of
refinement-edge DoFs it zeroes on both sides of the DSS step. This is the one place where the solver must consult the
level topology; we compute it once at setup, reusing the DSS primitive itself rather than building a dedicated edge
structure. A face of a level-$\ell$ cell lies on $E_\ell$ when it is interior to the domain yet has no same-level
neighbour, indicating that the refined region ends. We read this from the authoritative per-level face connectivity
(built by coincident-vertex matching, so intra-block, rotated, and inter-block neighbours are all recorded): a face is
flagged when its same-level neighbour is absent and its boundary marker is interior, with one guard: since the
connectivity of a multi-block mesh may fail to record an inter-block link, we additionally require that no other
same-level cell share the face's vertex set.

The interiors of the surviving faces are only a first set: a refinement edge also meets the level along its own edges
and corners, and a DoF where a flagged face abuts an ordinary interior face (e.g., the re-entrant corner of an L-shaped
region, Figure~\ref{fig:dss_lshaped}) belongs to $E_\ell$ without being interior to any single flagged face. We use DSS
to find these incidences: set a marker to one on the flagged face DoFs, apply a single same-level DSS pass (the kernels
of \secref{sec:dss}), and read $E_\ell$ off as the support of the result. The marker spreads to the shared edge and
vertex DoFs the exchanges couple. The mask is thus computed by the very kernel it later guards and folded into the
stored inverse diagonal at setup, so $E_\ell$ never appears as a runtime object.

\paragraph{In-place contracts and shared buffers.}
Every vector in the cycle (the per-level correction $e_\ell$, right-hand side $b_\ell$, operator output $A_\ell
    u_\ell$, and the active-mesh residual) has a fixed shape and is allocated once at setup. All components obey an
in-place contract: operators write $A_\ell u_\ell$ into a caller-provided buffer, smoothers update the iterate in
place, and transfer kernels accumulate into the persistent level buffers. Besides avoiding copies, this is what enables
the kernel fusions below, since an operation whose destination is caller-fixed can be folded into the kernel that
already streams its operands. The smoother's one dual scratch (for $A_\ell u_\ell$ in Algorithm~\ref{alg:smoother})
need not be owned: it borrows the cycle's per-level operator-output buffer (which the V-cycle uses to form the residual
for restriction) safely, since the cycle's use of that buffer at a level brackets the smoother calls there. This
reclaims one dual vector per level, a saving that matters because the cell-wise format stores every shared DoF once per
adjacent cell, so its vectors are larger than assembled ones. The aliasing is possible only because smoother and cycle
operate on identical unassembled level vectors under one in-place contract; in a classical assembled solver the two
buffers hold different formats and cannot share storage.

\paragraph{Fused residual restriction.}
The level residual $r_\ell = b_\ell - A_\ell u_\ell$ of Algorithm~\ref{alg:mg_local_vcycle} is never materialized. The
restriction kernel takes the two operands $b_\ell$ and $A_\ell u_\ell$ separately, loads both at the same fine-cell
indices, and forms the difference per child tile immediately before the tensor-product contraction $\bigotimes_i
    \mathcal{P}_{b_i}^T$ of Algorithm~\ref{alg:restriction}. Compared with a separate subtraction this removes one
full-vector write--read round trip per level at the price of a second read in a kernel that streams the data anyway.

\paragraph{Zero initial guess in the preconditioner.}
When the V-cycle preconditions the outer CG iteration it computes a fresh correction $M^{-1}\tilde{r}$ from a zero
initial guess on every call, which the implementation specializes. With $u_{\text{act}} = 0$ the operator application
$A_{\text{act}} u_{\text{act}}$ vanishes, so the residual $\tilde{r}_{\text{act}} = \tilde{b} - A_{\text{act}}
    u_{\text{act}}$ equals $\tilde{b}$. The cycle therefore skips the initial operator apply and the residual formation,
feeding $\tilde{b}$ straight into the level distribution~\eqref{eq:level_distribution} (which only reads its source).
This saves one full operator application and the residual copy--subtract on the largest level per preconditioner call,
with no change to the iterates.

\paragraph{Child-major cell ordering.}
The in-memory ordering of a level's cells has a decisive effect on the throughput of the inter-grid transfers, though
it leaves the result unchanged. The transfer kernels (Algorithms~\ref{alg:prolongation}--\ref{alg:restriction}) couple
a coarse parent with its $2^d$ children, organized so that a single launch processes one child slot $c \in \{0, \dots,
    2^d-1\}$ across all parents at once; performance is then governed by how the slot-$c$ children of consecutive parents
are spaced. In the natural topological ``cell-major'' ordering a parent's children occupy consecutive indices (local
index $2^d e + c$ for parent $e$), so one launch strides by $2^d$ and the warp-wide accesses fragment into one
transaction per cell. We instead store the level in \emph{child-major} order, grouping cells by slot (local index $c\,
    n_{\text{parent}} + e$), so the slot-$c$ children of consecutive parents are contiguous and the launch streams them as
one coalesced run. The reordering is a one-time permutation of the level's cell list and is otherwise invisible:
operator evaluation, the smoother, and the DSS kernels are oblivious to cell order, addressing each cell through static
offsets. In our measurements it lifts the achieved bandwidth of the prolongation and restriction kernels from roughly
$300$\,GB/s to over $1500$--$1000$\,GB/s (depending on polynomial degree), putting the transfers on a streaming,
memory-bound footing with the rest of the cycle.

\paragraph{Deferred assembly in the smoother: valence-baked Jacobi.}
The most consequential reordering concerns the smoothing sweep itself. Read literally, the
smoother~\eqref{eq:cellwise_smoother} assembles the masked dual residual (one DSS pass on $\tilde r_\ell$) and then
applies the diagonal scaling, so each sweep performs its inter-cell communication on the \emph{residual}. The
implementation instead defers all assembly to a single pass over the \emph{updated iterate}. Let $\bar{S}$ denote the
\emph{averaging} variant of DSS,
\begin{equation}
    \bar{S} \;=\; \mathcal{G}_\ell \bigl(\mathcal{G}_\ell^T \mathcal{G}_\ell\bigr)^{-1} \mathcal{G}_\ell^T,
    \label{eq:averaging_dss}
\end{equation}
which assembles the duplicated copies, divides by their number, and re-injects the result; note that
$\mathcal{G}_\ell^T \mathcal{G}_\ell = \operatorname{diag}(\kappa)$ is the diagonal matrix of DoF valences $\kappa$
(the number of cells sharing each DoF), so $\bar{S}$ is the Euclidean orthogonal projection onto the
continuous subspace $\mathcal{G}_\ell \mathbb{V}_{CG,\ell}$, and $\bar{S} u = u$ for every continuous $u$. Writing
$K_{\text{cell}} = \operatorname{diag}(\mathcal{G}_\ell \kappa)$ for the cell-wise valence diagonal, a direct per-DoF
computation gives the commutation identity
\begin{equation}
    \bar{S}\, K_{\text{cell}}\, D_{\text{cell}, \ell}^{-1} \;=\; D_{\text{cell}, \ell}^{-1}\, S,
    \label{eq:valence_baking}
\end{equation}
since both sides map the copies $x_i$ of a DoF with diagonal entry $d$ and valence $\kappa$ to the common value
$d^{-1} \sum_i x_i$. Consequently, for a continuous iterate $u_\ell$ the smoothing
update~\eqref{eq:cellwise_smoother} can be rewritten as
\begin{equation}
    u_\ell + \omega\, \iota_{S_\ell} D_{\text{cell}, \ell}^{-1} S\, \iota_{S_\ell}^* \tilde{r}_\ell
    \;=\;
    \bar{S} \Bigl( u_\ell + \omega\, \iota_{S_\ell} K_{\text{cell}}\, D_{\text{cell}, \ell}^{-1} \iota_{S_\ell}^* \tilde{r}_\ell \Bigr),
    \label{eq:deferred_assembly}
\end{equation}
using $\bar{S} u_\ell = u_\ell$ for the first term and~\eqref{eq:valence_baking} for the second (the masks commute
with both diagonals and with $\bar S$ as in Lemma~\ref{lem:smoother_equivalence}). The right-hand side
of~\eqref{eq:deferred_assembly} is what the implementation executes: the bracket is a purely cell-local update of each
copy by its own raw, unassembled residual entry (no copy of $\tilde b_\ell$, no DSS on the residual) and the single
averaging pass $\bar{S}$ at the end of the sweep simultaneously restores continuity and supplies the missing assembly.
The valence factor $K_{\text{cell}}$ is folded into the stored inverse diagonal at setup time
($D_{\text{cell}}^{-1} \mapsto K_{\text{cell}} D_{\text{cell}}^{-1}$), so it never appears as a runtime operation; in
particular the inter-grid transfers remain entirely valence-free as established in
\secref{sec:residual_equivalence}, and the valence enters the solver only as a one-time rescaling of a
precomputed coefficient vector. One smoothing sweep thus consists of three kernels: the cell-wise operator
application, one fused kernel performing the masked, scaled update of~\eqref{eq:deferred_assembly}'s bracket, and one
averaging-DSS pass (the sole point of inter-cell communication, as anticipated in \secref{sec:smoother_sweep}).
Algorithm~\ref{alg:smoother_fused} shows the resulting sweep, which replaces the literal transcription of
Algorithm~\ref{alg:smoother}; the precomputed diagonal $\widehat{D}^{-1}_\ell$ absorbs the valence factor and all
masks.

\begin{algorithm}[H]
    \caption{Fused smoothing sweep realizing~\eqref{eq:deferred_assembly}.}
    \label{alg:smoother_fused}
    \begin{algorithmic}[1]
        \Require Iterate $u_\ell$, raw right-hand side $\tilde{b}_\ell$ (read-only, never assembled), relaxation $\omega$; precomputed masked diagonal $\widehat{D}^{-1}_\ell := \iota_{S_\ell} \iota_{S_\ell}^*\, K_{\text{cell}}\, D_{\text{cell}, \ell}^{-1}$ (valence and masks baked in at setup).
        \Ensure Updated iterate $u_\ell$.
        \State $v_\ell \gets A_\ell\, u_\ell$ \Comment{Cell-wise operator, into the shared level buffer}
        \State $u_\ell \gets u_\ell + \omega\, \widehat{D}^{-1}_\ell \bigl(\tilde{b}_\ell - v_\ell\bigr)$ \Comment{One fused kernel: raw per-copy Jacobi update}
        \State $u_\ell \gets \bar{S}\, u_\ell$ \Comment{Averaging DSS: restores continuity \emph{and} assembles the update}
    \end{algorithmic}
\end{algorithm}

\paragraph{Constraints baked into the diagonal.}
The masks $\iota_{S_\ell}^*$ and $\iota_{S_\ell}$ of the smoother, as well as the homogeneous Dirichlet conditions on
$\partial\Omega$, are realized by zeroing the corresponding entries of the stored inverse diagonal $K_{\text{cell}}
    D_{\text{cell}, \ell}^{-1}$ at setup: a DoF whose inverse-diagonal entry is zero receives no update, which is exactly
the action of $\iota_{S_\ell} \iota_{S_\ell}^*$ in~\eqref{eq:cellwise_smoother}. No per-sweep masking pass and no
constraint application inside the DSS kernels is needed. (The valence is ill-defined on the refinement edge $E_\ell$,
but those entries are zeroed by the mask anyway.)

\section{Numerical Results}
\label{sec:results}

We evaluate the DSS multigrid solver on the Poisson problem $-\Delta u = f$ with homogeneous Dirichlet boundary
conditions, discretized with continuous $Q_p$ elements. We report polynomial degrees $p = 2, \dots, 5$, spanning a
representative range of orders. We omit $p = 1$: the single, untuned relaxation factor is mildly too large there, so it
is the one degree for which the fixed choice degrades the contraction rate (see below). All experiments are run in
\texttt{fp64} on a single A100 80\,GB SXM. The solver is the V-cycle of \secref{sec:smoother_construction} with the
cell-wise smoother realized as in \secref{sec:smoother_realization}; a fixed number of cycles is executed without any
in-loop residual monitoring (\secref{sec:energy_measure}). The convergence measure is the energy
norm~\eqref{eq:energy_measure}, computed from local data outside the timed section; it is $\sqrt{\gamma}$ with $\gamma
    = \langle r, u\rangle$, i.e.\ a preconditioner-induced norm rather than the Euclidean residual. Unless stated otherwise
the CG stopping criterion is a reduction of this measure by $10^{-14}$, a tight tolerance chosen so that the reported
iteration counts reflect the asymptotic contraction rather than an early exit.

We consider two geometries (Figure~\ref{fig:mg_geometries}). The \emph{cube} is the Cartesian unit cube, on which the
operator is applied as the factored tensor-product symmetric sum $(D\otimes B\otimes B + B\otimes D\otimes B + B\otimes
    B\otimes D)$ scaled by the single per-element scalar $\det J\,J^{-2}$: one reference operator pair shared across all
levels, with the only per-element geometry a scalar load. The \emph{ball} is a curved domain on which the operator is
the general cell-wise Laplacian (evaluate gradients, apply the identity physics, integrate gradients) with a
per-quadrature-point $Q_k$ mapping built from the curved atlas. The two paths exercise, respectively, the affine and
the fully curved ends of the operator spectrum while sharing the same smoother, transfer, and DSS machinery. The Jacobi
diagonal is built generically for both paths by probing the level operator with unit local-DoF vectors and summing
through the DSS.

The smoother uses a fixed under-relaxation $\omega$, chosen once per geometry rather than tuned per degree: $\omega =
    0.7$ on the cube and $\omega = 0.6$ on the ball. A degree-dependent optimum could be obtained from the smoothing-factor
eigenvalues of the cell-wise iteration, but we do not tune it; the single value is mildly too large for $p = 1$, which
is why we exclude that degree as noted above.

\begin{figure}[htbp]
    \centering
    \begin{subfigure}[b]{0.45\textwidth}
        \centering
        \includegraphics[width=0.7\textwidth]{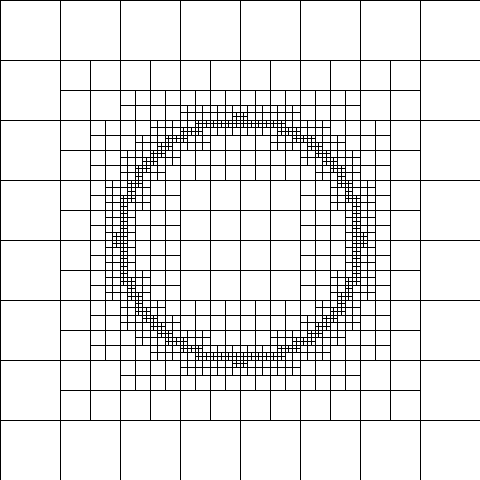}
        \caption{Cube (Cartesian)}
        \label{fig:mg_geometry_cube}
    \end{subfigure}
    \hfill
    \begin{subfigure}[b]{0.45\textwidth}
        \centering
        \includegraphics[width=0.7\textwidth]{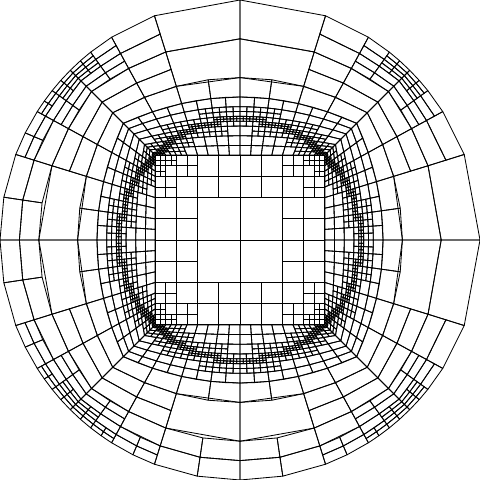}
        \caption{Ball (curved)}
        \label{fig:mg_geometry_ball}
    \end{subfigure}
    \caption{Two 2D examples of  test geometries with 4 adaptive refinement toward the domain features:
        (a) the Cartesian unit cube, evaluated with the factored tensor-product operator; (b) the curved ball,
        evaluated with the general cell-wise Laplacian and a $Q_k$ mapping.}
    \label{fig:mg_geometries}
\end{figure}

% ---------------------------------------------------------------------------
\subsection{V-cycle convergence}
\label{sec:results_convergence}

Figure~\ref{fig:mg_convergence} shows the error against the V-cycle index for $p = 2, 3, 4$, on a uniformly refined
grid (panel~a) and on an adaptively refined grid with hanging nodes (panel~b). Solid lines track the energy
measure~\eqref{eq:energy_measure}; dashed lines the Euclidean norm of the DSS-synchronized residual
$\|\mathrm{sync}(r)\|$. The two measures decay at the same asymptotic rate on both meshes, confirming that the energy
measure (available without any global assembly) is a faithful proxy for the true error and that the cell-wise iterates
remain conforming, including across the hanging-node interfaces of panel~(b).

\begin{figure}[htbp]
    \centering
    % V-cycle convergence of the DSS multigrid solver, fp64. MG only (no CG).
% Two panels: (a) uniform refinement, (b) adaptive refinement with hanging
% nodes. Both [REAL DATA].
% x = V-cycle index, y = residual measure (eq:energy_measure defines
% energy = sqrt(<r,u>); we plot <r,u> itself, i.e. the energy measure squared).
% Solid lines: energy measure <r,u> (squared, sqrt not taken); dashed
% lines: ||sync(r)|| as-is (already a plain norm, not squared), the Euclidean
% norm of the DSS-synchronized residual (only meaningful on the
% uniform/conforming mesh of panel (a)).
% One colour per polynomial degree (p = 2, 3, 4).
%
% Panel (a) data: raw_results/multigrid/mg_convergence.csv, generated by
%   raw_results/multigrid/parse_convergence.py from
%   raw_results/multigrid/sweep_cube_monitor_smooth2.out (solver=mg, REF=8).
% Panel (b) data: raw_results/multigrid/mg_convergence_adaptive.csv, generated
%   by raw_results/multigrid/parse_convergence.py from
%   raw_results/multigrid/sweep_cube_adaptive_smooth2_monitor.out
%   (solver=mg, global=5, adaptive=4).
%
% This file provides the two subfigures; the outer figure environment with the
% overall caption/label lives in splitted/multigrid.tex.
\pgfplotsset{
    mgconv/.style={
            width=\linewidth, height=0.85\linewidth,
            xlabel={V-cycle}, ylabel={residual measure},
            xmin=1, xmax=19, ymin=1e-20, ymax=1,
            grid=both,
            legend style={font=\scriptsize, at={(0.97,0.97)}, anchor=north east},
            legend columns=1, legend cell align=left,
        },
}
\begin{subfigure}[b]{0.49\linewidth}
    \centering
    \begin{tikzpicture}
        \begin{semilogyaxis}[mgconv]
            \addplot[orange, mark=square*,   thick, opacity=0.55]  table[col sep=comma, x=cycle, y=p2_energy_sq] {raw_results/multigrid/mg_convergence.csv};
            \addplot[orange, mark=square,    dashed, mark options={solid}, forget plot] table[col sep=comma, x=cycle, y=p2_sync]   {raw_results/multigrid/mg_convergence.csv};
            \addplot[teal,   mark=triangle*, thick, opacity=0.55]  table[col sep=comma, x=cycle, y=p3_energy_sq] {raw_results/multigrid/mg_convergence.csv};
            \addplot[teal,   mark=triangle,  dashed, mark options={solid}, forget plot] table[col sep=comma, x=cycle, y=p3_sync]   {raw_results/multigrid/mg_convergence.csv};
            \addplot[blue,   mark=diamond*,  thick, opacity=0.55]  table[col sep=comma, x=cycle, y=p4_energy_sq] {raw_results/multigrid/mg_convergence.csv};
            \addplot[blue,   mark=diamond,   dashed, mark options={solid}, forget plot] table[col sep=comma, x=cycle, y=p4_sync]   {raw_results/multigrid/mg_convergence.csv};
            \legend{$p{=}2$, $p{=}3$, $p{=}4$}
        \end{semilogyaxis}
    \end{tikzpicture}
    \caption{Uniform refinement.}
    \label{fig:mg_conv_uniform}
\end{subfigure}
\hfill
\begin{subfigure}[b]{0.49\linewidth}
    \centering
    \begin{tikzpicture}
        \begin{semilogyaxis}[mgconv]
            \addplot[orange, mark=square*,   thick, opacity=0.55]  table[col sep=comma, x=cycle, y=p2_energy_sq] {raw_results/multigrid/mg_convergence_adaptive.csv};
            \addplot[orange, mark=square,    dashed, mark options={solid}, forget plot] table[col sep=comma, x=cycle, y=p2_sync]   {raw_results/multigrid/mg_convergence_adaptive.csv};
            \addplot[teal,   mark=triangle*, thick, opacity=0.55]  table[col sep=comma, x=cycle, y=p3_energy_sq] {raw_results/multigrid/mg_convergence_adaptive.csv};
            \addplot[teal,   mark=triangle,  dashed, mark options={solid}, forget plot] table[col sep=comma, x=cycle, y=p3_sync]   {raw_results/multigrid/mg_convergence_adaptive.csv};
            \addplot[blue,   mark=diamond*,  thick, opacity=0.55]  table[col sep=comma, x=cycle, y=p4_energy_sq] {raw_results/multigrid/mg_convergence_adaptive.csv};
            \addplot[blue,   mark=diamond,   dashed, mark options={solid}, forget plot] table[col sep=comma, x=cycle, y=p4_sync]   {raw_results/multigrid/mg_convergence_adaptive.csv};
            \legend{$p{=}2$, $p{=}3$, $p{=}4$}
        \end{semilogyaxis}
    \end{tikzpicture}
    \caption{Adaptive refinement with hanging nodes.}
    \label{fig:mg_conv_adaptive}
\end{subfigure}
    \caption{V-cycle convergence of the DSS multigrid solver, MG only (A100 80\,GB SXM, \texttt{fp64}) for
        $p = 2, 3, 4$ (one colour each). Solid lines: energy measure~\eqref{eq:energy_measure}; dashed lines:
        $\|\mathrm{sync}(r)\|$. (a) Uniform refinement (global refinement 8); (b) adaptive refinement with hanging
        nodes (global refinement 5, adaptive refinement 4).
        The energy measure vanishes at iteration zero (the initial guess is $u = 0$, so
        $\langle r, u\rangle = 0$); hence the first point of the solid curves is not shown.
    }
    \label{fig:mg_convergence}
\end{figure}

On the uniformly refined grid the V-cycle contracts the energy measure at a grid-independent rate ranging from $\approx
    0.36$ per cycle at $p = 2$ to $\approx 0.49$ at $p = 4$, so a fixed handful of cycles drives the energy error below
discretization error regardless of the number of levels. On the adaptively refined grid (panel~b) the hanging-node
interfaces are handled by the masked cell-wise smoother of \secref{sec:smoother_realization} (no constraint
application) and the contraction rate is essentially unchanged ($\approx 0.41$ at $p = 2$ and $\approx 0.50$ at $p =
    4$), demonstrating that local refinement costs nothing in iteration count.

% ---------------------------------------------------------------------------
\subsection{Solve time}
\label{sec:results_solve_time}

Table~\ref{tab:mg_solve_time} reports the throughput of the DSS multigrid as a CG preconditioner at the finest
completed problem size for each degree, on both geometries. Two throughputs are given: the per-cycle MG throughput
(unique DoFs divided by the mean V-cycle time of the un-monitored fixed-cycle loop) and the end-to-end solve throughput
(unique DoFs divided by the total CG solve time to the fixed energy-norm reduction). On the affine cube the MG cycle
sustains roughly $0.5$--$1.1$\,GDoF/s and the whole solve $45$--$100$\,MDoF/s at $8$--$11$ CG iterations; on the curved
ball, where the operator carries a per-quadrature-point mapping, the cycle runs at $0.1$--$0.25$\,GDoF/s. The cube
degree ordering is non-monotone (the $p=3$ cycle is the fastest at $1147$\,MDoF/s, ahead of $p=4$'s $662$) because the
per-cell tensor-product working set and its fit in cache and registers, not the DoF count, set the streaming rate.

Under the streaming-pass cost model, convergence monitoring is a significant expense because evaluating the residual
norm requires an extra full-vector memory pass and a DSS assembly. We therefore execute unmonitored V-cycles and rely
on the outer Krylov solver (e.g., Conjugate Gradients) to evaluate the stopping criterion using its existing inner
products.

\begin{table}[htbp]
    \centering
    % Solve-time comparison: DSS multigrid
% Data: raw_results/multigrid/mg_solve_time.csv, generated by
%   raw_results/multigrid/parse_solve_time.py from
%   raw_results/multigrid/sweep_{cube,ball}_perf_smooth2.out (2 smoothing steps).
% For each p, the finest completed solver=mg run (highest REF with a full
% "Total time" summary) is used. Unique DoFs = active leaf cells * p^3
% (marginal-rate approximation, not exact global DoF count).
% MG cycle throughput = unique DoFs / (MG per-cycle time); this comes from
% the MG run (fixed 20-cycle, un-monitored loop -- no real convergence count).
% CG iterations and Solve throughput come from the CG run at the same problem
% size (same REF): CG iterations is the genuine convergence count, and Solve
% throughput = unique DoFs / total CG solve time (the whole solution vector
% over the whole solve, not summed per-iteration throughput).
% p=1 and p=5 Ball omitted: p=1 Ball CG did not converge within the
% 20-iteration cap in this sweep; p=5 Ball has no completed run at all.
\begin{tabular}{llrrrr}
    \toprule
    $p$ & Setting & Unique DoFs   & CG iter. & MG throughput & Solve throughput \\
        &         & (estimate)    &          & [MDoF/s]      & [MDoF/s]         \\
    \midrule
    \multirow{2}{*}{$2$}
        & Cube    & 16\,777\,216  & 8        & 522.1         & 44.7             \\
        & Ball    & 14\,680\,064  & 14       & 108.2         & 7.1              \\
    \midrule
    \multirow{2}{*}{$3$}
        & Cube    & 56\,623\,104  & 9        & 1147.0        & 99.5             \\
        & Ball    & 49\,545\,216  & 15       & 252.6         & 15.4             \\
    \midrule
    \multirow{2}{*}{$4$}
        & Cube    & 134\,217\,728 & 10       & 662.4         & 56.7             \\
        & Ball    & 14\,680\,064  & 14       & 129.6         & 8.5              \\
    \midrule
    $5$ & Cube    & 262\,144\,000 & 11       & 996.5         & 77.0             \\
    \bottomrule
\end{tabular}

    \caption{Solve time of the DSS multigrid to a fixed energy-norm reduction,
        $p = 2, \dots, 5$, \texttt{fp64}, A100 80\,GB SXM.}
    \label{tab:mg_solve_time}
\end{table}

Each DSS sweep is a sequence of memory-bound streaming passes, with no local solves and no overlapping subdomain
bookkeeping. Its throughput therefore tracks the streaming bandwidth of the machine rather than any arithmetic ceiling.

We compare our approach with a vertex-patch smoother. While such smoothers achieve stronger contraction per sweep, they
require computationally expensive local subdomain solves. For $p = 3$ in \texttt{fp64}, a recent implementation reports
a solve throughput of $116$\,MDoF/s~\cite{cui2025implementation},\footnote{The cited work also reports higher
    throughputs in \texttt{fp32}, but we restrict our comparison to \texttt{fp64}. Because our method is memory-bound
    (Figure~\ref{fig:mg_roofline}), reducing the working precision would proportionally decrease memory traffic, offering a
    natural avenue for further speedups.} compared to the $99.5$\,MDoF/s achieved here on the cube geometry. The two
methods therefore perform similarly. However, this comparison favors the patch smoother, as the cited throughput
corresponds to a residual reduction of $10^{-9}$, whereas our solver reaches an energy-norm reduction of $10^{-14}$.

This is not a definitive comparison of the smoothing strategies themselves. The cited performance reflects a specific
implementation rather than the intrinsic limit of patch smoothing. Indeed, a patch smoother allows the execution of DSS
to be bypassed entirely during the smoother sweep. Combining patch smoothing with the machinery introduced here could
therefore yield further performance improvements.

% ---------------------------------------------------------------------------
\subsection{Adaptively refined meshes}
\label{sec:results_adaptive}

The masked cell-wise smoother of \secref{sec:smoother_realization} applies unchanged on adaptively refined meshes with
hanging nodes: no constraints are assembled. On a locally refined mesh the level hierarchy is markedly different from
the uniform case: the coarse levels still grow by the uniform factor $2^d = 8$, but once the adaptive front sets in the
levels grow sub-geometrically and can even shrink, so the fine part of the hierarchy is a stack of small, cheap levels
rather than a few large ones. Table~\ref{tab:mg_level_cells} lists the per-level active-cell counts of the deepest
meshes in the sweep; note the cube's level~$7$, which is smaller than level~$6$ because only a fraction of the domain
is refined that deep.

\begin{table}[htbp]
    \centering
    % Per-level active-cell counts of the deepest adaptive meshes in the sweep.
% Data: "Per-level cell counts" lines of
%   raw_results/multigrid/sweep_{cube,ball}_adaptive_smooth2.out.
% Cube: global refinement 7 + adaptive refinement 4 (11 levels);
% Ball : global refinement 5 + adaptive refinement 4 ( 9 levels).
% The first levels grow by the uniform factor 2^d = 8 (coarse, global part);
% once the adaptive front sets in the counts grow sub-geometrically and even
% dip (Cube level 7), i.e. the fine levels are small and numerous.
\begin{tabular}{@{}l*{11}{r}@{}}
    \toprule
    Level $\ell$ & $0$ & $1$ & $2$ & $3$ & $4$ & $5$ & $6$ & $7$ & $8$ & $9$ & $10$ \\
    \midrule
    Cube & 1 & 8 & 64 & 512 & 4\,096 & 32\,768 & 262\,144 & \multicolumn{1}{!{\vrule}r}{106\,944} & 385\,536 & 1\,237\,760 & 2\,468\,800 \\
    Ball & 7 & 56 & 448 & 3\,584 & 28\,672 & \multicolumn{1}{!{\vrule}r}{69\,712} & 237\,088 & 705\,568 & 1\,277\,440 & & \\
    \bottomrule
\end{tabular}

    \caption{Per-level active-cell counts of the deepest adaptively refined meshes (Cube: global refinement $7$,
        adaptive refinement $4$, $11$ levels; Ball: global refinement $5$, adaptive refinement $4$, $9$ levels). The
        coarse levels grow by the uniform factor $2^d = 8$; the fine, adaptive levels grow sub-geometrically and the
        cube's level~$7$ even dips below level~$6$. The vertical rule marks the onset of adaptive refinement: levels to
        its right are locally (adaptively) refined.}
    \label{tab:mg_level_cells}
\end{table}

One might expect these many small levels to erode throughput, since a level below the launch-saturation size cannot
fill the machine. In practice the per-cycle MG throughput on the adaptive meshes
(Table~\ref{tab:mg_adaptive_solve_time}) is on par with, and at $p = 2$ even above, the uniform-mesh throughput
(Table~\ref{tab:mg_solve_time}): the child-major coalescing of \secref{sec:smoother_realization} keeps every level
streaming, and the bulk of the DoFs still lives on the one or two largest levels, so the small levels add cycle latency
but little total time.

Table~\ref{tab:mg_adaptive_iters} reports the CG iteration count to a fixed energy-norm reduction as the number of
adaptive refinements is increased (at the coarsest global refinement of the sweep, so that the adaptive depth is the
only varying quantity). The iteration count is flat in the adaptive depth for every degree on both geometries,
confirming that local refinement costs nothing in iteration count: the hanging-node interfaces are handled by the
smoother without degrading the grid-independent contraction.

\begin{table}[htbp]
    \centering
    % CG iteration count vs. number of adaptive refinements (refinement independence).
% Data: raw_results/multigrid/sweep_{cube,ball}_adaptive_smooth2.out
%   (2 smoothing steps). Smallest global refinement in the sweep (global = 5)
%   is used so that the adaptive levels 1..4 are the only varying quantity.
% Values are the CG iteration count to the fixed energy-norm reduction with the
% DSS multigrid as preconditioner. A dash (--) marks a run absent from the sweep.
% Ball p=1 omitted (CG hit the 20-iteration cap without converging).
\subcaptionbox{Cube\label{tab:mg_adaptive_iters_cube}}{%
    \begin{tabular}{@{}c*{4}{>{\centering\arraybackslash}p{2.4em}}@{}}
        \toprule
            & \multicolumn{4}{c}{Adaptive refinements}                   \\
        \cmidrule(lr){2-5}
        $p$ & $1$                                      & $2$ & $3$ & $4$ \\
        \midrule
        $2$ & 9                                        & 10  & 9   & 9   \\
        $3$ & 10                                       & 10  & 10  & 10  \\
        $4$ & 10                                       & 10  & 10  & 10  \\
        $5$ & 11                                       & 12  & 12  & 12  \\
        \bottomrule
    \end{tabular}}
\qquad\qquad
\subcaptionbox{Ball\label{tab:mg_adaptive_iters_ball}}{%
    \begin{tabular}{@{}c*{4}{>{\centering\arraybackslash}p{2.4em}}@{}}
        \toprule
            & \multicolumn{4}{c}{Adaptive refinements}                   \\
        \cmidrule(lr){2-5}
        $p$ & $1$                                      & $2$ & $3$ & $4$ \\
        \midrule
        $2$ & 12                                       & 13  & 13  & 13  \\
        $3$ & 14                                       & 14  & 14  & 14  \\
        $4$ & 14                                       & 14  & 14  & --  \\
        \bottomrule
    \end{tabular}}

    \caption{CG iteration count of the DSS-multigrid-preconditioned solver to a fixed energy-norm reduction versus the
        number of adaptive refinements, at global refinement $5$, \texttt{fp64}, A100 80\,GB SXM. A dash marks a run
        absent from the sweep (out of memory).}
    \label{tab:mg_adaptive_iters}
\end{table}

Table~\ref{tab:mg_adaptive_solve_time} repeats the throughput measurement of Table~\ref{tab:mg_solve_time} on the most
deeply adaptively refined meshes of the sweep (adaptive refinement $4$, at the finest global refinement with completed
runs for each geometry). As on the uniform grids, the DSS multigrid sustains near the same streaming throughput despite
the deeper, more fragmented hierarchy: the cube cycle runs at $574$\,MDoF/s at $p = 2$ and $992$\,MDoF/s at $p = 3$
(versus $522$ and $1147$ uniform), and the CG iteration count is unchanged. The solve throughput tracks the MG cycle
throughput up to the fixed CG iteration count.

\begin{table}[htbp]
    \centering
    % Solve-time comparison of the DSS multigrid on adaptively refined meshes.
% Data: raw_results/multigrid/sweep_{cube,ball}_adaptive_smooth2.out
%   (2 smoothing steps). Highest adaptive-refinement level in the sweep
%   (adaptive = 4) is used, at the highest global refinement with completed
%   runs for each geometry: Cube global = 7, Ball global = 5.
% Unique DoFs = active leaf cells * p^3 (marginal-rate approximation, not the
% exact constrained global DoF count).
% MG throughput = unique DoFs / (MG per-cycle time) from the solver=mg run
%   (fixed 20-cycle, un-monitored loop). Solve throughput = unique DoFs /
%   total CG solve time from the solver=cg run at the same problem size; CG
%   iterations is the genuine convergence count from that run.
% p=1 omitted from both geometries; Ball p>=4 and Cube p=5 omitted (no
% completed adaptive=4 run at that size).
\begin{tabular}{llrrrr}
    \toprule
    $p$ & Setting & Unique DoFs   & CG iter. & MG throughput & Solve throughput \\
        &         & (estimate)    &          & [MDoF/s]      & [MDoF/s]         \\
    \midrule
    \multirow{2}{*}{$2$}
        & Cube    & 31\,490\,432  & 9        & 574.2         & 52.3             \\
        & Ball    & 15\,961\,680  & 13       & 108.5         & 7.6              \\
    \midrule
    \multirow{2}{*}{$3$}
        & Cube    & 106\,280\,208 & 9        & 992.3         & 88.8             \\
        & Ball    & 54\,870\,858  & 14       & 244.2         & 15.8             \\
    \midrule
    \multirow{1}{*}{$4$}
        & Cube    & 251\,923\,456 & 10       & 605.9         & 51.8             \\
    \bottomrule
\end{tabular}

    \caption{Solve time of the DSS multigrid on adaptively refined meshes (adaptive refinement $4$; Cube at global
        refinement $7$, Ball at global refinement $5$), \texttt{fp64}, A100 80\,GB SXM. Unique DoFs
        $\approx$ active leaf cells $\times\, p^{3}$.}
    \label{tab:mg_adaptive_solve_time}
\end{table}

% ---------------------------------------------------------------------------
\subsection{Roofline of the V-cycle building blocks}
\label{sec:results_roofline}

Figure~\ref{fig:mg_roofline} places the kernel families that make up a V-cycle (operator evaluation, restriction,
prolongation, and the cell-wise smoother) on the A100 roofline, with one point per degree $p = 2, \dots, 5$ in each
family. All are memory bound and sit on the bandwidth slope: the V-cycle moves data, it does not compute, and its cost
is governed by the bytes streamed rather than the FLOPs issued.

\begin{figure}[htbp]
    \centering
    \input{figures/fig_mg_roofline.tex}
    \caption{Roofline of the multigrid building blocks on the A100 80\,GB SXM (\texttt{fp64}): operator
        evaluation, restriction, prolongation, and the cell-wise smoother, one point per degree
        $p = 2, \dots, 5$ connected within each family. Solid lines are the bandwidth slope (2039\,GB/s) and the
        vector FP64 ceiling (9.7\,TFLOP/s); the dashed line is the tensor-core FP64 ceiling (19.5\,TFLOP/s).
        Panel (a) is the Cartesian geometry, panel (b) a general geometry; transfers are geometry-independent
        and shared between the two.}
    \label{fig:mg_roofline}
\end{figure}

Because every component is bandwidth bound, the time per V-cycle is the sum of the bytes moved by each kernel divided
by the achievable bandwidth. Comparing the two panels makes the dominant cost explicit: on a general geometry
(panel~(b)) the operator evaluation and the smoother drop well below their Cartesian counterparts (panel~(a)) and
collapse onto the bandwidth slope, because the per-cell geometry data (the Jacobians and metric terms that must be
streamed alongside the solution) dwarf the degree-of-freedom data and consume the entire memory bandwidth. On the
Cartesian grid this geometry information is trivial and the kernels climb toward the arithmetic ceilings; on a curved
mesh it is not, and the transfers, which carry no geometry, are unaffected and therefore identical between the two
panels.

This identifies where the remaining headroom lies. Since the bottleneck is the geometry traffic rather than the
arithmetic, the lever on solve time is a smoother with better data locality that reuses geometry data across its
sweeps~\cite{wichrowski2025local,wichrowski2025smoothers}, or matrix-free formulations that avoid materializing the
per-cell metric terms altogether through matching or geometry-aware
methods~\cite{cui2025multigrid,wichrowski2025geometric,wichrowski2026SBM}. This is the sense in which the method is
already ``free'' on Cartesian grids: there is no arithmetic headroom being wasted, and the cell-wise storage and
structured DSS path of Part~I already minimize the solution traffic, so what remains is the geometry.

\section{Conclusions}
\label{sec:conclusions}

We have presented a geometric multigrid preconditioner for high-order continuous finite elements that runs entirely on
the redundant, cell-wise storage of the companion framework~\cite{wichrowski2026DSS}: the assembled global vector is
never formed on any level of the hierarchy, and the single inter-cell primitive of the whole V-cycle is one direct
stiffness summation inside the smoother. The central observation is that in this storage paradigm the machinery that
ordinarily complicates adaptive multigrid dissolves. Hanging-node constraints are never assembled: the plain
tensor-product transfers, applied to the \emph{unassembled} residual, reproduce the classical constrained restriction
algebraically, including the action of the transposed constraint matrix, and the edge operators of local smoothing
reduce to a pointwise masking of the residual. We proved that the resulting cell-wise V-cycle is equivalent, iterate by
iterate, to the classical local multigrid method, so it inherits that method's convergence theory rather than requiring
a new one.

The numerical experiments bear this out. On both a Cartesian cube and a curved ball, in uniform and in adaptively
refined form, the V-cycle contracts the energy measure at a grid-independent rate, and, the contraction is unchanged
between the uniform and the hanging-node meshes: local refinement costs almost nothing in iteration count, because the
interfaces are handled by the masked smoother with no constraint application. On the GPU the whole solve is memory
bound; using nothing more than a masked point-Jacobi smoother it reaches a solve throughput at $p = 3$ in \texttt{fp64}
on par with a recent patch-smoother implementation~\cite{cui2025implementation}, despite driving the error to a
substantially tighter tolerance.

The roofline analysis locates the remaining headroom. Every kernel of the V-cycle sits on the bandwidth slope, and the
comparison of the two geometries shows that on curved meshes it is the per-cell geometry traffic, namely the Jacobians
and metric terms, that dwarfs the degree-of-freedom data and consumes the memory bandwidth, while on Cartesian grids
that information is trivial and the operator is effectively free. The lever on solve time is therefore not more
arithmetic but less geometry traffic: a smoother with better data locality that reuses geometry data across its
sweeps~\cite{wichrowski2025local,wichrowski2025smoothers}, or matrix-free formulations that avoid materializing the
metric terms through matching or geometry-aware
methods~\cite{cui2025multigrid,wichrowski2025geometric,wichrowski2026SBM}. Combining the constraint- and valence-free
adaptive machinery developed here with such smoothers, and reducing the working precision~\cite{goddeke2007performance}
now that the method is demonstrably bandwidth bound, are the natural directions for further gains.

\paragraph{Declarations}
Language models (Claude, Gemini) were applied as smoothers during drafting; the core(coarse) content as well as
transfers from those drafts are the author's own. The author retains full accountability for all scientific content.

%%%%%%%%%%%%%%%%%%%%%%%%
% Numerical results — Laplace operator.
%%%%%%%%%%%%%%%%%%%%%%%%

\bibliographystyle{siam}
\bibliography{literature,added_literature}

\end{document}